\newcommand\myshade{100}
\crefname{subsection}{Section}{Sections}
\theoremstyle{plain}
\newtheorem{thm}{Theorem}[section]
\newtheorem{lem}[thm]{Lemma}
\newtheorem{prop}[thm]{Proposition}
\newtheorem{cor}[thm]{Corollary}
\theoremstyle{definition}
\newtheorem{defn}[thm]{Definition}
\newtheorem{example}[thm]{Example}
\theoremstyle{remark}
\newtheorem*{rem}{Remark}
\def\@fnsymbol#1{\ensuremath{\ifcase#1\or \dagger\or \ddagger\or
           \dagger\dagger
           \or \ddagger\ddagger \else\@ctrerr\fi}}
\newcommand\N{\ensuremath{\mathbb{N}}}
\newcommand\R{\ensuremath{\mathbb{R}}}
\newcommand\Z{\ensuremath{\mathbb{Z}}}
\newcommand\C{\ensuremath{\mathbb{C}}}
\newcommand{\el}[1]{\ensuremath{\ell_{#1}}}
\DeclareMathOperator{\supp}{supp}
\newcommand{\falg}{$f\!$-algebra}
\begin{document}

\title{Banach lattice AM-algebras}

\author[D. Mu\~noz-Lahoz]{David Mu\~noz-Lahoz}
\address{Instituto de Ciencias Matem\'aticas (CSIC-UAM-UC3M-UCM)\\Consejo Superior de Investigaciones Cient\'ificas\\C/ Nicol\'as Cabrera, 13--15, Campus de Cantoblanco UAM\\28049 Madrid, Spain.}
\email{david.munnozl@uam.es}

\author[P. Tradacete]{Pedro Tradacete}
\address{Instituto de Ciencias Matem\'aticas (CSIC-UAM-UC3M-UCM)\\Consejo Superior de Investigaciones Cient\'ificas\\C/ Nicol\'as Cabrera, 13--15, Campus de Cantoblanco UAM\\28049 Madrid, Spain.}
\email{pedro.tradacete@icmat.es}

\subjclass[2020]{46B42, 46J10, 46J30, 06F25} 

\keywords{Banach lattice algebra; AM-space; spaces of continuous functions}


\begin{abstract}
    An analogue of Kakutani's representation theorem for Banach lattice algebras is provided. We characterize Banach lattice algebras that embed as a closed sublattice-algebra of $C(K)$ precisely as those with a positive approximate identity $(e_\gamma )$ such that $x^{*}(e_\gamma )\to \|x^{*}\|$ for every positive functional $x^{*}$. We also show that every Banach lattice algebra with identity other than $C(K)$ admits different product operations which are compatible with the order and the algebraic identity. This complements the classical result, due to Martignon, that on $C(K)$ spaces pointwise multiplication is the unique compatible product.
\end{abstract}

\maketitle

\section{Introduction}\label{sec:intro}

The space $C(K)$ of continuous functions on a compact Hausdorff space $K$, with its usual uniform norm $\|\cdot\|_\infty$, pointwise order and pointwise operations can be seen both as a Banach lattice and as a Banach algebra. In fact, its lattice and algebra structures are very nicely interwoven: for instance, the notions of closed lattice ideals and closed algebraic ideals coincide (and are in correspondence with the closed subsets of $K$ as zero sets); also, it is well known that closed subalgebras of $C(K)$ are sublattices, and closed sublattices containing the constant functions are necessarily subalgebras of $C(K)$ (the latter fact playing a role in some proofs of the classical Stone--Weierstrass theorem). Finally, one can easily observe that the product of positive functions in $C(K)$ is again positive. This seemingly na\"ive property relating the order and algebra structures lies behind the definition of a Banach lattice algebra: a \textit{Banach lattice algebra} is a Banach lattice that is also a Banach algebra in which the product of positive elements is positive. In the last few years, there has been an increasing interest in understanding the structure of Banach lattice algebras (\cite{alekhno2012,alekhno2018,blanco2023,de_jeu2021,jaber2020,wickstead2017_questions,wickstead2017_ordered,wickstead2017_2d}), thus reviving earlier developments initiated in the 1980s and 90s (\cite{grobler1988,huijsmans1988,huijsmans1995,scheffold1980,scheffold1984,scheffold1988}).

From the point of view of Banach lattices, $C(K)$ spaces are very well understood. Recall that a Banach lattice $X$ is said to be an AM-space if
$\|x\vee y\|=\|x\|\vee \|y\|$ for all $x,y \in X_+$. If there exists a
strong unit $e \in X_+$ such that $\|x\|=\inf \{\, \lambda >0 : |x|\le
\lambda e\, \} $ for all $x \in X$, then $X$ is called an
AM-space with unit $e$. S.\ Kakutani proved that AM-spaces are
just the closed sublattices of $C(K)$:

\begin{thm}[\cite{kakutani1941}]\label{thm:kakutani}
    \begin{enumerate}
        \item[]
        \item Every AM-space with unit is lattice isometric to $C(K)$
            for some compact Hausdorff space $K$, with the unit
            corresponding to the constant one function $\mathbb{1}_K$.
        \item Every AM-space is lattice isometric to a closed
            sublattice of $C(K)$ for some compact Hausdorff space $K$.
            More precisely, there exists a family of pairs of points
            $\{(t_i,s_i)\}_{i \in I}\subseteq K\times K$ and scalars
            $\{\lambda _i\}_{i \in I}\subseteq [0,1)$ such that the
            AM-space is lattice isometric to the closed sublattice of
            $C(K)$:
            \[
            \{\, f \in C(K) : f(t_i)=\lambda _i f(s_i)\text{ for all
            }i \in I \, \}.
            \]
    \end{enumerate}
\end{thm}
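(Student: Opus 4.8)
The plan is to prove (i) by representing an AM-space with unit on a compact space of lattice homomorphisms, and then to deduce (ii) from (i) together with a description of the closed sublattices of $C(K)$.

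For (i), let $X$ be an AM-space with unit $e$. A preliminary observation: the unit-norm formula and the Archimedean property give $|x|\le\|x\|\,e$ for every $x\in X$, so any positive functional $\varphi$ satisfies $\|\varphi\|=\sup_{0\le y\le e}\varphi(y)=\varphi(e)$. Hence the state space $B:=\{\varphi\in X^{*}_{+}:\varphi(e)=1\}$ is exactly the positive part of the unit sphere of $X^{*}$; it is convex, and weak-$*$ compact by Banach--Alaoglu. Set $K:=\{\varphi\in B:\varphi\text{ is a lattice homomorphism}\}$; since a weak-$*$ limit of lattice homomorphisms is again one and the condition $\varphi(e)=1$ is weak-$*$ closed, $K$ is weak-$*$ compact. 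Define $J\colon X\to C(K)$ by $Jx(\varphi):=\varphi(x)$. Then $Jx$ is continuous by definition of the weak-$*$ topology, $J$ is linear, $J$ is a lattice homomorphism because every $\varphi\in K$ is, and $Je=\mathbb{1}_{K}$.

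The core of the proof is that $J$ is isometric. Since $J$ is a lattice homomorphism and $\|x\|=\|\,|x|\,\|$, it is enough to check $\sup_{\varphi\in K}\varphi(u)=\|u\|$ for $u:=|x|\ge0$. Here ``$\le$'' is clear because $K\subseteq B_{X^{*}}$. For ``$\ge$'' one combines three facts: $\|u\|=\sup_{\varphi\in B}\varphi(u)$ by Hahn--Banach (valid since $u\ge0$ and $B$ is the positive part of the sphere); the linear functional $\varphi\mapsto\varphi(u)$ attains its maximum over the compact convex set $B$ at an extreme point; and --- the key lemma --- every extreme point of $B$ is a lattice homomorphism, i.e.\ $\operatorname{ext}(B)\subseteq K$. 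Chaining these, $\|u\|=\sup_{\varphi\in B}\varphi(u)=\sup_{\varphi\in\operatorname{ext}(B)}\varphi(u)\le\sup_{\varphi\in K}\varphi(u)$. Establishing the key lemma is where the order structure is genuinely used, and I expect it to be the main obstacle; the standard device is to fix $0\le y\le e$, write $\varphi$ as a convex combination of two states weighted by $\varphi(y)$ and $\varphi(e-y)$, and invoke extremality to force multiplicativity of $\varphi$ for $\wedge$ and $\vee$. (Alternatively one can avoid extreme points altogether via a Riesz-space Hahn--Banach argument: given $u\ge0$ and $\mu<\|u\|$, the element $(u-\mu e)^{+}$ is nonzero, and a lattice homomorphism ``carried by'' it extends to all of $X$ with value exceeding $\mu$ on $u$.)

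Since $J$ is an isometry, $J(X)$ is a closed sublattice of $C(K)$ --- it is complete, hence closed --- which contains the constants, as $Je=\mathbb{1}_{K}$, and separates the points of $K$, since $\varphi\ne\psi$ forces $\varphi(x)\ne\psi(x)$ for some $x$. By the lattice form of the Stone--Weierstrass theorem, a closed sublattice of $C(K)$ that contains the constants and separates points is all of $C(K)$; hence $J(X)=C(K)$, which proves (i). For (ii), let $X$ be an arbitrary AM-space. Its dual $X^{*}$ is an AL-space, so $X^{**}$ is an AM-space with unit and hence, by (i), lattice isometric to some $C(K)$; composing with the canonical lattice isometric embedding $X\hookrightarrow X^{**}$ realizes $X$ as a closed sublattice $S\subseteq C(K)$. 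To put $S$ in the stated form, note that for $t\in K$ the evaluation $\delta_{t}\colon f\mapsto f(t)$ restricts to a lattice homomorphism $S\to\R$ of norm at most $1$ (possibly $0$). First replace $K$ by its compact Hausdorff quotient identifying $t$ with $s$ whenever $\delta_{t}=\delta_{s}$; this leaves $S$ unchanged. For distinct points $t,s$ with $\delta_{t},\delta_{s}$ proportional, after possibly swapping them we may write $\delta_{t}=\lambda\,\delta_{s}$ with $\lambda\in[0,1)$ (the value $0$ absorbing the case $\delta_{t}=0$); collecting all such dependent pairs as $\{(t_{i},s_{i})\}_{i\in I}$ with scalars $\lambda_{i}$ gives $S\subseteq\{f\in C(K):f(t_{i})=\lambda_{i}f(s_{i})\text{ for all }i\}$. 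The reverse inclusion --- that $S$ is cut out by exactly these relations --- is the genuinely delicate point of (ii): one argues, once more by a Stone--Weierstrass argument applied over the mutually non-proportional point evaluations, that a closed sublattice of $C(K)$ imposes no constraints beyond those forced by coincident or proportional evaluations at points of $K$.
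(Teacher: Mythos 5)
The paper gives no proof of this statement: it is Kakutani's classical theorem, quoted from \cite{kakutani1941} and later invoked as a black box (in particular Theorem~3 of that paper inside the proof of Theorem~\ref{thm:generalAMalg}), so your attempt can only be measured against the classical argument, which is essentially the route you outline. For part (i) your scheme (state space $B$, the compact set $K$ of lattice homomorphisms, Bauer's maximum principle, Kakutani--Stone/Stone--Weierstrass) is correct, but the device you sketch for the key lemma is the C$^*$-algebra one and does not transfer literally: splitting $\varphi$ into two states ``weighted by $\varphi(y)$ and $\varphi(e-y)$'' for $0\le y\le e$ presupposes a way of cutting $\varphi$ along the element $y$, which uses a product. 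The lattice argument splits along dominated functionals instead: if $0\le\psi\le\varphi$ and $0<\psi(e)<1$, then $\varphi=\psi(e)\,\tfrac{\psi}{\psi(e)}+(1-\psi(e))\,\tfrac{\varphi-\psi}{1-\psi(e)}$ exhibits $\varphi$ as a convex combination of elements of $B$, so extremality forces $\psi=\psi(e)\varphi$; hence $[0,\varphi]=\{t\varphi:0\le t\le 1\}$, and a positive functional with this property is a lattice homomorphism. With that replacement, (i) is complete.

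The genuine gap is in the ``more precisely'' clause of (ii). The bidual embedding correctly realizes $X$ as a closed sublattice $S$ of some $C(K)$, but the refined description is precisely the assertion that $S$ contains every $f\in C(K)$ satisfying all the two-point constraints $S$ imposes, and this is the substance of Kakutani's sublattice theorem. One needs (a) the two-point interpolation criterion: if for every pair $t,s\in K$ there is $g\in S$ with $g(t)=f(t)$ and $g(s)=f(s)$, then $f\in S$ (proved by the usual sup-of-infs compactness argument); and (b) the observation that the image of $S$ under $f\mapsto(f(t),f(s))$ is a linear sublattice of $\R^2$, together with the classification of these (namely $\R^2$, $\{0\}$, and the lines $\{(a,\lambda a):a\in\R\}$ with $\lambda\ge 0$ plus the vertical axis), which is what yields relations $f(t_i)=\lambda_i f(s_i)$ and, after passing to the quotient identifying points with equal evaluations and swapping a pair whenever $\lambda>1$, the normalization $\lambda_i\in[0,1)$. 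You explicitly label this step ``the genuinely delicate point'' and only gesture at a Stone--Weierstrass-type argument, so as written the precise form of (ii) --- the very part of the theorem the paper later relies on --- is asserted rather than proved.
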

The non-trivial implication in Kakutani's theorem is particularly relevant in Banach lattice theory, as it allows us to view elements in abstract Banach lattices as true functions on some compact Hausdorff space (see for instance \cite[Section 1.b]{lindenstrauss_tzafriri1979}).

From a Banach lattice algebra point of view, it is natural to wonder whether an analogue intrinsic characterization might exist for closed sublattices of $C(K)$ that are also subalgebras. The aim of \cref{sec:AMalg} is to answer this question. More
precisely, we define an \textit{AM-algebra with unit} to be a Banach lattice
algebra which is an AM-space with unit $e$, where $e$ is also an algebraic
identity. We also define an \textit{AM-algebra with approximate unit} to be a Banach lattice
algebra with an approximate identity $(e_\gamma )$ such that
$x^{*}(e_\gamma )\to \|x^{*}\|$ for every positive functional $x^{*}$.
After a discussion of these notions, we prove the following
analogue of Kakutani's representation theorem for Banach lattice
algebras.

\begin{restatable}{thm}{generalAMalg}
    \label{thm:generalAMalg}
    \begin{enumerate}
         \item[]
         \item Every AM-algebra with unit is lattice and algebra
            isometric to $C(K)$ for some compact Hausdorff space $K$,
            with the unit corresponding to the constant one function $\mathbb{1}_K$.
        \item Every AM-algebra with approximate unit is lattice and
            algebra isometric to a closed sublattice-algebra of
            $C(K)$ for some compact Hausdorff space $K$. More
            precisely, there exists a closed set $F\subseteq K$ such that the
            AM-algebra is lattice and algebra isometric to the closed
            sublattice-algebra of $C(K)$:
            \[
            \{\, f \in C(K) : f(t)=0\text{
            for all }t \in F\, \} .
            \]
            In particular, it embeds as an order and algebraic ideal in $C(K)$.
    \end{enumerate}
\end{restatable}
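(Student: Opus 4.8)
The plan is to prove part \textup{(i)} first and then reduce part \textup{(ii)} to it by passing to a unitization.

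For \textup{(i)}, let $X$ be an AM-algebra with unit $e$. Forgetting the product, $X$ is an AM-space with unit, so by \cref{thm:kakutani}(i) there is a lattice isometry $T\colon X\to C(K)$ \emph{onto} $C(K)$, with $Te=\mathbb{1}_K$, for some compact Hausdorff $K$. Transporting the product of $X$ through $T$ yields a product $\odot$ making $(C(K),\|\cdot\|_\infty,\le,\odot)$ a Banach lattice algebra with algebraic identity $\mathbb{1}_K$, and it suffices to show that $\odot$ is pointwise multiplication. Fix $t\in K$ and set $J=\ker\delta_t$, with $\delta_t$ evaluation at $t$. The key step is that $J$ is a $\odot$-ideal: if $a,b\in C(K)_+$ with $\delta_t(a)=\delta_t(b)=0$, then $0\le b\le\|b\|_\infty\mathbb{1}_K$ and positivity of $\odot$ give $0\le a\odot b\le\|b\|_\infty\,(a\odot\mathbb{1}_K)=\|b\|_\infty\,a$, so $\delta_t(a\odot b)=0$; since $\delta_t$ is a lattice homomorphism, $|a|\in J$ whenever $a\in J$ and $J$ is an order ideal, so passing to positive and negative parts yields $a\odot b\in J$ for all $a,b\in J$. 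Now for arbitrary $f,g\in C(K)$ write $f=\delta_t(f)\mathbb{1}_K-\widetilde f$ and $g=\delta_t(g)\mathbb{1}_K-\widetilde g$ with $\widetilde f,\widetilde g\in J$; expanding $f\odot g$ and using that $\mathbb{1}_K$ is the identity, every summand but $\delta_t(f)\delta_t(g)\mathbb{1}_K$ lies in $J$, hence $\delta_t(f\odot g)=\delta_t(f)\delta_t(g)=\delta_t(fg)$. As $t$ was arbitrary, $f\odot g=fg$, so $T$ is at once a lattice and an algebra isometry onto $C(K)$. (In particular this re-proves Martignon's uniqueness theorem and shows the product on $X$ was automatically commutative.)

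For \textup{(ii)}, let $X$ be an AM-algebra with approximate unit $(e_\gamma)$. First, $X$ is an AM-space: for $x^*,y^*\in X^*_+$ the hypothesis gives $\|x^*+y^*\|\ge(x^*+y^*)(e_\gamma)=x^*(e_\gamma)+y^*(e_\gamma)\to\|x^*\|+\|y^*\|$, so $X^*$ is an $L$-space and $X$ an AM-space. Next, form the unitization $X^1=X\oplus\R e$ with product $(x,\lambda)(y,\mu)=(xy+\mu x+\lambda y,\lambda\mu)$, positive cone $\{(x,\lambda):\|x^-\|\le\lambda\}$, and the order-unit norm relative to $e=(0,1)$. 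The plan is to check that $X^1$ is then an AM-algebra with unit $e$ containing $X\times\{0\}$ as a closed algebraic and order ideal of codimension one, the inclusion $X\hookrightarrow X^1$ being isometric because $X$ is an AM-space. Granting this, part \textup{(i)} makes $X^1$ lattice and algebra isometric to $C(K)$ for some compact Hausdorff $K$, under which $X$ becomes a closed algebraic ideal of $C(K)$. As recalled in the introduction, closed algebraic ideals of $C(K)$ coincide with closed order ideals, and these are precisely the sets $\{f\in C(K):f|_F=0\}$ with $F\subseteq K$ closed; since $C(K)/\{f:f|_F=0\}\cong C(F)$ and the codimension is one, $F$ reduces to a single point. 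Restricting the isometry to $X$ gives the asserted embedding as an order and algebraic ideal.

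The main obstacle is the middle step of \textup{(ii)}: verifying that $X^1$, with the cone and norm above, really is an AM-algebra with unit — that the cone is a lattice cone, that the resulting order-unit norm is complete, submultiplicative, and restricts to the original norm on $X$, and that $X\times\{0\}$ sits in $X^1$ as claimed. This is exactly where the strength of the condition $x^*(e_\gamma)\to\|x^*\|$, beyond the mere existence of a positive approximate identity, comes in: it is what forces $X$ to be an AM-space, and it is the AM-space structure of $X$ that governs the lattice-theoretic behaviour of $X^1$. Everything else is routine given \cref{thm:kakutani} and part \textup{(i)}.
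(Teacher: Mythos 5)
Your part (i) is correct: Kakutani gives the lattice isometry onto $C(K)$ with $e\mapsto\mathbb{1}_K$, and your $\ker\delta_t$ argument is in effect a self-contained re-proof of Martignon's uniqueness result (\cref{prop:martignon}), which is exactly the ingredient the paper combines with \cref{thm:kakutani} for this part. The problem is part (ii). The step you defer as "routine" -- that $X^1=X\oplus\R e$ with the cone $\{(x,\lambda):\|x^-\|\le\lambda\}$ and the order-unit norm is an AM-\emph{algebra} with unit -- is precisely the crux of the theorem, and your stated reason for it (that $X$ is an AM-space) is not sufficient. Two concrete failures: (a) the proposed cone need not be a lattice cone on $X^1$ for a general AM-space $X$; e.g.\ for the Kakutani-type sublattice $X=\{f\in C[0,1]:f(0)=\tfrac12 f(1)\}$, the unitization identifies with $\{g\in C([0,1]\sqcup\{\infty\}):2g(0)=g(1)+g(\infty)\}$ with the induced order, which is not a vector lattice (an element with $g(0)=0$, $g(1)=1$, $g(\infty)=-1$ has no least upper bound with $0$). (b) Positivity of the product $(x,\lambda)(y,\mu)=(xy+\mu x+\lambda y,\lambda\mu)$ with respect to that cone is not a consequence of the Banach lattice algebra axioms: for $\ell_\infty^2$ with product $(x_1,y_1)(x_2,y_2)=(0,x_1x_2)$ (the paper's own example of an AM-space Banach lattice algebra that is not a sublattice-algebra of any $C(K)$), the element $((-1,0),1)$ is positive in your cone but its square is $((-2,1),1)$, which is not. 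So the lattice and positivity properties of $X^1$ encode exactly the order--algebra compatibility the theorem asserts, and they must be derived from the approximate algebraic identity hypothesis -- which your outline never actually uses (you only invoke $x^*(e_\gamma)\to\|x^*\|$, i.e.\ the approximate \emph{order} unit). Without using $e_\gamma x\to x$ the statement is false: a nonzero AM-space with the zero product has an approximate order unit but cannot embed as a subalgebra of $C(K)$, since $C(K)$ has no nonzero nilpotents.

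For comparison, the paper avoids the unitization entirely by passing to the bidual: since $A$ is an AM-space, $\ell_1$ does not lattice embed in it, so every positive bilinear map is Arens regular (\cref{thm:arensreg}); $A^{**}$ is lattice isometric to $C(K)$ and becomes a Banach lattice algebra under the Arens product; the approximate unit converges weak$^*$ to $\mathbb{1}_K$ (\cref{lem:approx_id_bidual}), which is then the algebra identity by \cref{prop:unit_arens}; and Martignon's theorem forces the Arens product to be pointwise, so $A$ sits in $C(K)$ as a closed sublattice-algebra. This is the mechanism that replaces your unproven middle step: in $A^{**}$ the lattice-algebra compatibility comes for free from known results about Arens products on Banach lattice algebras, whereas for the ad hoc unitization it would have to be proved from scratch, and it is not clear how to do so directly from the approximate identity. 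If you want to keep the unitization route, you must supply those arguments; as written, the proof of (ii) has a genuine gap.
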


Let us explain further the analogy between Theorems~\ref{thm:kakutani} and~\ref{thm:AMalg}. By Kakutani's theorem, every AM-space with unit is lattice isometric to a $C(K)$ space. When we also have a Banach lattice algebra structure, it seems that the most natural compatibility condition between the order unit and the algebraic structure is to ask for the order unit to be also an algebraic identity. According to previous theorem, this condition characterizes $C(K)$ spaces up to lattice and algebra isometry, just like Kakutani's theorem characterizes $C(K)$ spaces up to lattice isometry as those being an AM-space with unit.

Similarly, by Kakutani's theorem, every AM-space is lattice isometric to a closed sublattice of $C(K)$. Being an AM-space is equivalent to having a positive net $(e_\gamma)$ such that $x^*(e_\gamma)\to \|x^*\|$ for every positive functional $x^*$ (see \cref{lem:approx_AM}). Such a net is what we call an approximate order unit, establishing an analogy with the fact that the norm of every positive functional is attained at an order unit (see \cref{lem:def_approx_unit}). When we also have a Banach lattice algebra structure, it seems that the most natural compatibility condition between the AM-space structure and the algebraic structure is to ask for such an approximate order unit to be an approximate algebraic identity. This property characterizes the closed sublattice-algebras of $C(K)$ up to isometry, just like Kakutani's theorem characterizes closed sublattices of $C(K)$, up to isometry, as those having an approximate order unit (i.e., as those being AM-spaces).

In our proof of Theorem \ref{thm:generalAMalg}, the following result, due to L.\ Martignon, plays a key role.

\begin{prop}[{\cite[Proposition 1.4]{martignon1980}}]\label{prop:martignon}
    Let $K$ be a
    compact Hausdorff space and let $\star\colon C(K)\times C(K)\to
    C(K)$ be a binary operation such that $C(K)$ with
    pointwise order and product $\star$ is a vector lattice algebra.
    If in addition $\mathbb{1}_K$ is the identity of
    $\star$, then $\star$ is the pointwise multiplication.
\end{prop}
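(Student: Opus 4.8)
The plan is to show that the only way to define a product $\star$ on $C(K)$ compatible with the pointwise order and having $\mathbb{1}_K$ as identity is the pointwise product. I would work pointwise: fix $t\in K$ and consider the evaluation functional $\delta_t\colon f\mapsto f(t)$. The key observation is that, because $\star$ is positive (product of positives is positive) and unital, the bilinear form $(f,g)\mapsto (f\star g)(t)$ should be controlled by $\delta_t$ alone. First I would record the elementary consequences of the axioms: $\mathbb{1}_K\star f=f$; if $0\le f\le g$ and $0\le h$ then $0\le f\star h\le g\star h$ (monotonicity of $\star$ in each variable, from positivity and bilinearity); and $\star$ is commutative and associative as part of being a (vector lattice) algebra — though associativity may not even be needed.

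The central step is to prove that $(f\star g)(t)$ depends only on $f(t)$ and $g(t)$. I would first handle the case where $f,g\ge 0$. Suppose $f_1(t)=f_2(t)=a\ge 0$ and $g\ge 0$. Given $\varepsilon>0$, by continuity there is an open neighbourhood $U$ of $t$ on which $|f_i-a|<\varepsilon$, $i=1,2$. Using Urysohn's lemma pick $0\le u\le \mathbb{1}_K$ with $u(t)=1$ and $\supp u\subseteq U$. Then $(a-\varepsilon)u\le f_i u\le (a+\varepsilon)u$ pointwise — wait, one must be careful that $f_i$ and $u$ are compared correctly; the cleaner route is to note $f_i\wedge\big((a+\varepsilon)\mathbb{1}_K\big)$ and $f_i\vee\big((a-\varepsilon)\mathbb{1}_K\big)$ agree with $f_i$ near $t$, so after multiplying by $u$ one can sandwich $f_1 u$ and $f_2 u$ between the same two multiples of $u$. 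Applying monotonicity of $\star$ and evaluating at $t$ (where $u(t)=1$) gives $|(f_1\star g)(t)-(f_2\star g)(t)|\le 2\varepsilon\,(u\star g)(t)\le 2\varepsilon\,(\mathbb{1}_K\star g)(t)=2\varepsilon\, g(t)$. Letting $\varepsilon\to 0$ yields $(f_1\star g)(t)=(f_2\star g)(t)$. By symmetry and then by splitting into positive and negative parts (using bilinearity of $\star$), we conclude that $(f\star g)(t)$ is a function of $f(t)$ and $g(t)$ only; call it $\varphi_t(f(t),g(t))$.

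Finally I would identify $\varphi_t$. Bilinearity forces $\varphi_t$ to be bilinear in its two real arguments, hence $\varphi_t(a,b)=c(t)\,ab$ for some scalar $c(t)=\varphi_t(1,1)=(\mathbb{1}_K\star\mathbb{1}_K)(t)=\mathbb{1}_K(t)=1$, using that $\mathbb{1}_K$ is the $\star$-identity. Therefore $(f\star g)(t)=f(t)g(t)$ for every $t$, i.e.\ $\star$ is pointwise multiplication. The main obstacle is the localization argument in the previous paragraph: one has to be slightly careful to produce, purely from the order structure, two continuous functions sandwiching $f_1u$ and $f_2u$ that coincide away from a small neighbourhood, so that the positivity of $\star$ can be leveraged to kill the discrepancy; once this "support-shrinking via Urysohn functions" is set up correctly, the rest is routine bilinear algebra. (I do not expect to need associativity or commutativity of $\star$ at all — only bilinearity, positivity, and the identity axiom.)
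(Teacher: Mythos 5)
Your central localization step is circular. After sandwiching $f_1u$ and $f_2u$ between $(a-\varepsilon)u$ and $(a+\varepsilon)u$, monotonicity of $\star$ only yields
\[
\bigl|\bigl((f_1u)\star g\bigr)(t)-\bigl((f_2u)\star g\bigr)(t)\bigr|\le 2\varepsilon\,(u\star g)(t)\le 2\varepsilon\,g(t),
\]
i.e.\ an estimate for $(f_iu)\star g$, not for $f_i\star g$. Passing from one to the other at the point $t$ (your parenthetical ``evaluating at $t$, where $u(t)=1$'') presupposes that $(h\star g)(t)$ depends only on the behaviour of $h$ at or near $t$ --- but this locality of $\star$ is exactly what the proposition asserts and what you are trying to prove. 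Moreover, this cannot follow from the ingredients you actually invoke: your argument uses only bilinearity, positivity and the left identity $\mathbb{1}_K\star g=g$, and for the positive, bilinear, associative, left-unital operation $f\star g:=f(t_0)\,g$ (with $t_0\in K$ fixed) your claimed estimate fails whenever $f_1(t)=f_2(t)$ but $f_1(t_0)\neq f_2(t_0)$: if $\supp u$ misses $t_0$, then $\bigl((f_iu)\star g\bigr)(t)=0$ while $(f_i\star g)(t)=f_i(t_0)g(t)$. So the silent replacement of $f_iu$ by $f_i$ is a genuine gap, and any correct proof must use the right-identity axiom $f\star\mathbb{1}_K=f$, which never appears in your argument. (The paper itself gives no proof; it quotes the result from Martignon.)

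The gap is easy to close, and doing so makes the Urysohn/support-shrinking machinery unnecessary. The right identity gives locality in one line: if $h\ge 0$, $h(t)=0$ and $g\ge 0$, then $g\le\|g\|_\infty\mathbb{1}_K$ and positivity give
\[
0\le (h\star g)(t)\le \|g\|_\infty\,(h\star\mathbb{1}_K)(t)=\|g\|_\infty\,h(t)=0 .
\]
Now write $f=f(t)\mathbb{1}_K+h^+-h^-$ with $h=f-f(t)\mathbb{1}_K$, so that $h^{\pm}\ge 0$ and $h^{\pm}(t)=0$; then $(f\star g)(t)=f(t)\,(\mathbb{1}_K\star g)(t)=f(t)g(t)$ for every $g\ge 0$, hence for every $g$ by linearity, and for every $t\in K$. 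This proves \cref{prop:martignon} using only bilinearity, positivity and the two-sided identity --- confirming your expectation that associativity and commutativity are not needed --- but note that both the left and the right identity properties are essential, as the example $f\star g=f(t_0)g$ shows.
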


In other words, if we fix the lattice structure to be given pointwise and
the algebraic identity to be given by the constant one function $\mathbb{1}_K$, there
exists a unique vector lattice algebra structure on $C(K)$. Since this fact is fundamental in our approach, one could
wonder whether there might be other Banach lattice algebras with the same property.
In \cref{sec:products}, it will be shown that this is not the case. More precisely, we
have the following.

\begin{restatable}{thm}{products}
    \label{thm:products}
    Let $A$ be a Banach lattice algebra with identity $e$ and order $\le $. Suppose that
    there is a unique product on $A$ that makes it into a vector
    lattice algebra with identity $e$ and order $\le $.
    Then $A$ is lattice and algebra isometric to
    $C(K)$, with $e$ corresponding to $\mathbb{1}_K$.
\end{restatable}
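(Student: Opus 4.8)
My plan is to derive the statement from \cref{thm:generalAMalg}: I would argue that the uniqueness assumption forces $A$ to be an AM-algebra with unit $e$, whereupon part~(i) of that theorem supplies the desired lattice and algebra isometry onto some $C(K)$. Two reductions are essentially free. First, the opposite product $x\star y:=y\cdot x$ is again compatible with the order and has $e$ as identity, so uniqueness makes $A$ commutative. Second, once we know that $e\ge 0$ and that $e$ is a strong order unit, the norm is automatically the order-unit norm $\|\cdot\|_e$ determined by $e$: in that case $A$ is uniformly complete (as is every Banach lattice), so $(A,\|\cdot\|_e)$ is an AM-space with unit $e$ and hence lattice isomorphic, say by $\Phi$, to some $C(K)$ with $\Phi(e)=\mathbb{1}_K$, by \cref{thm:kakutani}(i); transporting $\cdot$ along $\Phi$ gives a vector lattice algebra product on $C(K)$ with identity $\mathbb{1}_K$, which by \cref{prop:martignon} must be pointwise multiplication; thus, via $\Phi$, the norm $\|\cdot\|$ becomes a Riesz norm on $C(K)$ that is submultiplicative for the pointwise product and satisfies $\|\mathbb{1}_K\|=\|e\|=1$ (as is standard for unital Banach algebras), and then $\|f\|_\infty=r(f)\le\|f\|$ (spectral radius $\le$ norm) together with $\|f\|\le\|f\|_\infty\,\|\mathbb{1}_K\|=\|f\|_\infty$ (Riesz norm) force $\|\cdot\|=\|\cdot\|_\infty$, so that $\Phi$ is a lattice and algebra isometry. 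It therefore suffices to prove, using uniqueness, that $e\ge 0$ and that $e$ is a strong order unit.

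Equivalently — and this is the heart of the proof — I must show that every Banach lattice algebra with identity $e$ which is \emph{not} an AM-algebra with unit $e$ admits a second product compatible with its order and still having $e$ as identity. When $e\ge 0$ the principal ideal $I_e=\{\,x\in A:|x|\le\lambda e\text{ for some }\lambda>0\,\}$ is a sublattice and, because multiplication by a positive element is a positive operator (so $|xy|\le x|y|$ for $x\ge 0$), a subalgebra; moreover $(I_e,\|\cdot\|_e)$ is an AM-space with unit $e$, so by the reduction above its product is already forced to be the pointwise one — that is, $I_e$ is rigid. If $I_e\ne A$, the idea is that the products of elements lying \emph{outside} $I_e$ are not constrained in the same way, and one should exploit this slack to deform $\cdot$ into an inequivalent compatible product with the same identity; concretely, one looks for $x\star y=xy+B(x,y)$ with $B$ bilinear, vanishing whenever an argument lies in (a suitable closure of) $I_e$, and adjusted so that associativity and positivity are preserved. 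In the basic case $B$ may be supported on the band $\{\,x\in A:|x|\wedge e=0\,\}$, which is nonzero exactly when $e$ is not a weak unit, the two-dimensional ``dual numbers'' being the model example; that same example, equipped with a skew positive cone, also shows that $e\ge 0$ cannot be taken for granted and must itself be deduced from uniqueness by a variant of this construction.

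I expect this explicit production of a second product to be the main obstacle, since it has to be carried out for an arbitrary Banach lattice algebra with a non-AM identity, with no a priori description of how $A$ sits over $I_e$ (and with the case $e\not\ge 0$ to be treated separately); by contrast, the reduction to it — via \cref{thm:kakutani}, \cref{thm:generalAMalg} and \cref{prop:martignon}, and the spectral-radius identification of the norm — is routine.
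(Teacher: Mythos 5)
Your overall strategy is the right one (and is the paper's): argue by contraposition, i.e.\ show that whenever $e$ fails to be a strong order unit one can manufacture a second order-compatible product with the same identity, and handle the ``rigid'' case $A=A_e$ by identifying $(A_e,\|\cdot\|_e)$ with $C(K)$ via \cref{thm:kakutani} and \cref{prop:martignon} and then recovering the isometry through the spectral radius — this reduction is essentially \cref{thm:bla_ideal}(i) and is fine. But the heart of the theorem, which you yourself flag as ``the main obstacle,'' is precisely the step you do not carry out: you only propose to look for a perturbation $x\star y = xy + B(x,y)$ with $B$ ``adjusted so that associativity and positivity are preserved,'' supported on the band $\{\,x : |x|\wedge e=0\,\}$. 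As it stands this is a plan, not a proof. Note also that your plan presupposes that this disjoint band is nonzero whenever $A_e\neq A$; a priori $e$ could be a weak but not a strong order unit, and ruling that out (equivalently, getting a usable decomposition of $A$ over $A_e$) already requires the nontrivial fact, due to Huijsmans and recorded as \cref{thm:bla_ideal}(ii), that $A_e$ is a \emph{projection band} in $A$. The paper's proof leans on exactly this: with the band projection $P$ onto $A_e$ and multiplicative functionals $\alpha,\beta$ on $A_e\cong C(K)$ it defines the explicit product $x\star y=PxPy+\alpha(Px)P^{d}y+P^{d}x\,\beta(Py)$, checks associativity and positivity, and shows it differs from the original product except in one degenerate situation ($A_e=\R$ and $xy=0$ for all $x,y\in A_e^{d}$), which is then handled by a second explicit construction $x\ast y=\lambda\mu e+\lambda y'+\mu x'+\phi(x')\phi(y')x_0$. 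Your sketch anticipates neither the need for the projection-band structure nor this degenerate case (where a perturbation ``vanishing when an argument lies in $I_e$'' of the naive kind would have to be rank-one and positive, exactly as in the paper), so the construction that constitutes the actual content of the theorem is missing.

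Two smaller points. First, your worry that $e\ge 0$ ``must itself be deduced from uniqueness'' is unnecessary in this setting: the paper's convention is that a Banach lattice algebra with identity has a norm-one identity, which is automatically positive (Bernau--Huijsmans, Wickstead), and the uniqueness hypothesis quantifies over \emph{vector lattice algebra} products with identity $e$, where positivity of $e$ is part of the definition; so there is no separate non-positive case to treat. Second, the commutativity reduction via the opposite product, while correct, is not needed anywhere in the argument.
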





\section{Sublattice-algebras of $C(K)$}\label{sec:AMalg}


A \emph{Banach lattice algebra} is a Banach lattice that
at the same time is a Banach algebra in which the product of positive
elements is positive. A subset which is both a sublattice and a subalgebra of a Banach lattice algebra will be called a \emph{sublattice-algebra}. We say that a Banach lattice algebra \emph{has an identity} if it has an algebraic identity of norm one (which is in fact positive, see \cite{bernau_huijsmans1990_unit,wickstead2017_questions}).
The space of continuous functions $C(K)$ on a compact Hausdorff space $K$, with pointwise order and product, is an example of a
Banach lattice algebra with identity. Further classical examples are those of the form $\ell_1(G)$, for a group $G$, with the convolution product and usual norm and lattice operations, or $L_r(E)$, the space of regular operators on a Dedekind complete Banach lattice $E$. Some examples of Banach
lattice algebras that need not have an identity are certain closed sublattice-algebras of $C(K)$. Our goal is to characterize those Banach lattice
algebras that are isometric to a closed sublattice-algebra of $C(K)$.

First note is that if $f\wedge g=0$ in $C(K)$,
and $h \in C(K)_+$, then $(hf)\wedge g=0=(fh)\wedge g$. Any
sublattice-algebra of $C(K)$ must also satisfy this
condition; that is, any closed sublattice-algebra of $C(K)$ must be a
\emph{Banach \falg}. This shows that not every Banach lattice algebra
which is also an AM-space must be a sublattice-algebra of some $C(K)$: the Banach lattice $\el
\infty ^2$, with product $(x_1,y_1)(x_2,y_2)=(0,x_1x_2)$, is an
$AM$-space, that is also a Banach lattice algebra, but cannot be a
sublattice-algebra of $C(K)$, since 
\[
    (1,0)^2\wedge (0,1)=(0,1)\text{ while }(1,0)\wedge (0,1)=(0,0),
\]
(i.e., it is not an \falg). Therefore, some relation between the AM-space
and algebraic structures has to be required. In the unital case, this relation is
straightforward.

\begin{defn}
    Let $A$ be a Banach lattice algebra with identity $e$. If $A$
    is also an AM-space with (order) unit $e$, we say that $A$ is an
    \emph{AM-algebra with unit $e$}.
\end{defn}

The following theorem asserts that AM-algebras with unit are nothing
but algebras of continuous functions, thus proving the first part of
\cref{thm:generalAMalg}. As observed in \cite{martignon1980}, this is
a direct consequence of \cref{thm:kakutani} and \cref{prop:martignon}.

\begin{thm}\label{thm:AMalg}
    Every AM-algebra with unit is algebra and
    lattice isometric to $C(K)$, for a certain compact Hausdorff space $K$.
\end{thm}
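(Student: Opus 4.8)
The plan is to combine the two tools that have just been quoted: Kakutani's representation theorem (\cref{thm:kakutani}(i)) to get a lattice isometry onto a $C(K)$, and Martignon's uniqueness result (\cref{prop:martignon}) to transport the algebra structure correctly. Let $A$ be an AM-algebra with unit $e$. By definition $A$ is a Banach lattice algebra whose norm makes it an AM-space with order unit $e$, and $e$ is simultaneously the algebraic identity. By \cref{thm:kakutani}(i) there is a lattice isometry $T\colon A\to C(K)$ for some compact Hausdorff $K$, and moreover $T$ carries the order unit $e$ to the constant function $\mathbb{1}_K$.

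Next I would push the product of $A$ forward along $T$. Define a binary operation $\star$ on $C(K)$ by $f\star g := T\!\bigl(T^{-1}f\cdot T^{-1}g\bigr)$, where $\cdot$ denotes the given product of $A$. Since $T$ is a linear lattice isomorphism and $\cdot$ makes $A$ into a Banach lattice algebra, $\star$ makes $C(K)$ (with its usual pointwise order, which $T$ respects) into a vector lattice algebra: bilinearity is transported from bilinearity of $\cdot$, associativity from associativity of $\cdot$, and positivity of $\star$ on positive elements from the corresponding property in $A$ together with the fact that $T$ and $T^{-1}$ are positive. Because $e$ is the identity of $\cdot$ and $Te=\mathbb{1}_K$, the function $\mathbb{1}_K$ is the identity of $\star$. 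Now \cref{prop:martignon} applies verbatim: $\star$ must be pointwise multiplication on $C(K)$. Hence $T(xy)=Tx\cdot Ty$ for all $x,y\in A$, i.e. $T$ is also an algebra homomorphism, and being already a surjective lattice isometry it is the desired lattice and algebra isometry onto $C(K)$.

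The only point that needs a word of care — and the closest thing to an obstacle — is checking that $\star$ genuinely satisfies the hypotheses of \cref{prop:martignon}, namely that $(C(K),\le,\star)$ is a \emph{vector lattice algebra}: one must confirm that "vector lattice algebra" here asks precisely for an associative bilinear product under which products of positives are positive, all of which transfer through the linear order-isomorphism $T$ without using the norm at all. The norm (AM-structure, unit) is used only to invoke Kakutani and to identify $Te$ with $\mathbb{1}_K$; Martignon's proposition is then purely order-algebraic. Once this is in place there is nothing further to do, so the argument is short. (This is exactly the deduction attributed to \cite{martignon1980} in the text preceding the statement.)
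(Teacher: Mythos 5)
Your proposal is correct and follows exactly the route the paper takes: the paper notes that \cref{thm:AMalg} is a direct consequence of \cref{thm:kakutani} and \cref{prop:martignon}, which is precisely the transport-of-product argument you spell out. Your added care about $\star$ satisfying the (purely order-algebraic) hypotheses of Martignon's proposition is a sensible filling-in of the details the paper leaves implicit.
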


In order to prove the second part of \cref{thm:generalAMalg}, we need to
define the notions of AM-spaces and AM-algebras with approximate
unit. The following is a well-known property that motivates our
definition.

\begin{lem}\label{lem:def_approx_unit}
    A Banach lattice $X$ is an AM-space with unit $e \in X_+$ if and
    only if $x^{*}(e)=\|x^{*}\|$ for all $x^{*} \in (X^{*})_+$.
\end{lem}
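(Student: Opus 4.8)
The statement to prove is \cref{lem:def_approx_unit}: a Banach lattice $X$ is an AM-space with unit $e \in X_+$ if and only if $x^{*}(e)=\|x^{*}\|$ for every $x^{*} \in (X^{*})_+$. The plan is to handle the two implications separately, with the forward direction being essentially bookkeeping and the reverse direction requiring a little more care.

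For the forward implication, assume $X$ is an AM-space with unit $e$, so $\|x\|=\inf\{\lambda>0 : |x|\le\lambda e\}$. Fix $x^{*}\in(X^{*})_+$. On the one hand, since $\|e\|=1$ (as $|e|\le 1\cdot e$ and $|e|\le\lambda e$ forces $\lambda\ge 1$), we get $x^{*}(e)\le\|x^{*}\|$. For the reverse inequality, take any $x$ in the unit ball of $X$; then $|x|\le\lambda e$ for some $\lambda$ arbitrarily close to $1$, and by positivity of $x^{*}$ we have $x^{*}(x)\le x^{*}(|x|)\le\lambda x^{*}(e)$, so $x^{*}(x)\le x^{*}(e)$ after letting $\lambda\downarrow 1$. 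Taking the supremum over such $x$ gives $\|x^{*}\|\le x^{*}(e)$, hence equality.

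For the reverse implication, assume $x^{*}(e)=\|x^{*}\|$ for all positive $x^{*}$. First I would check that $e$ is a weak order unit / actually a strong unit and that $\|e\|=1$; the latter follows since $e$ is positive and some norming positive functional $x^{*}$ on $e$ exists. Next, define the candidate norm $p(x)=\inf\{\lambda>0 : |x|\le\lambda e\}$ and aim to show $p(x)=\|x\|$ for all $x$ (in particular $p$ is finite, i.e.\ $e$ is a strong unit). The inequality $\|x\|\le p(x)$ is immediate from $|x|\le p(x)e$ and $\|e\|=1$. For the inequality $p(x)\le\|x\|$, the key step is: if $|x|\not\le\|x\| e$, i.e.\ $(|x|-\|x\|e)^+\ne 0$, then by the Hahn--Banach/Kakutani separation for Banach lattices there is a positive functional $x^{*}$ with $x^{*}\bigl((|x|-\|x\|e)^+\bigr)>0$, and one can arrange (restricting $x^{*}$ to the band generated by this element, or using that $(|x|-\|x\|e)^+\wedge(\|x\|e-|x|)^+=0$ so the positive and negative parts of $|x|-\|x\|e$ are disjoint) that $x^{*}(|x|)>\|x\| x^{*}(e)=\|x\|\,\|x^{*}\|$, contradicting $x^{*}(|x|)\le\|x^{*}\|\,\||x|\|=\|x^{*}\|\,\|x\|$. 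Hence $|x|\le\|x\|e$, giving $p(x)\le\|x\|$. Finally, with $\|x\|=\inf\{\lambda : |x|\le\lambda e\}$ established, the AM-identity $\|x\vee y\|=\|x\|\vee\|y\|$ for positive $x,y$ is routine: $|x\vee y|=x\vee y\le(\|x\|\vee\|y\|)e$ gives $\le$, and $x,y\le x\vee y$ gives $\ge$.

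The main obstacle is the reverse inequality $p(x)\le\|x\|$, specifically arranging that a positive functional witnessing $(|x|-\|x\|e)^+\ne 0$ can be taken to simultaneously nearly attain its norm against $|x|$; the clean way is to work with the disjointness $(|x|-\|x\|e)^+\perp(\|x\|e-|x|)^+$ and pick $x^{*}$ supported on the band of $(|x|-\|x\|e)^+$, so that $x^{*}(|x|)=x^{*}\bigl((|x|-\|x\|e)^+\bigr)+\|x\| x^{*}(e)>\|x\| x^{*}(e)$, then use $x^{*}(e)=\|x^{*}\|$. I expect this to be short once the disjointness observation is made, and the rest is standard Banach-lattice manipulation; since this is a "well-known property" the authors will likely present it compactly, and citing the relevant separation theorem for positive functionals (e.g.\ from \cite{lindenstrauss_tzafriri1979}) suffices.
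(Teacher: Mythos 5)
Your argument is correct in substance and lands on the same structural fact as the paper, namely that the hypothesis forces the norm to be the order-unit norm of $e$ (equivalently $B_X=[-e,e]$), but your backward direction takes a heavier route. The paper argues directly: for $x\in (B_X)_+$ and every $x^{*}\in (X^{*})_+$ one has $x^{*}(x)\le \|x^{*}\|=x^{*}(e)$, and since positive functionals determine the positive cone (Hahn--Banach separation from the closed cone), this gives $x\le e$; together with $\|e\|=\sup_{x^{*}\in (B_{X^{*}})_+}x^{*}(e)=\sup \|x^{*}\|=1$ this yields $(B_X)_+=[0,e]$, hence the order-unit norm. You instead argue by contradiction through the disjoint decomposition $|x|-\|x\|e=u-v$ with $u=(|x|-\|x\|e)^{+}$, which requires a positive functional with $x^{*}(u)>0$ and $x^{*}(v)=0$. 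That is the one loose point in your sketch: ``restricting $x^{*}$ to the band generated by $u$'' is not literally available (band projections need not exist in $X$, and a restriction to a band is not a functional on $X$); what is needed is the standard component construction $\psi(w)=\sup_n x^{*}(w\wedge nu)$ for $w\in X_+$, extended by Kantorovich, which gives $0\le \psi\le x^{*}$, $\psi(u)=x^{*}(u)$ and $\psi(v)=0$ because $u\wedge v=0$. With that lemma your contradiction $\psi(|x|)>\|x\|\psi(e)=\|x\|\,\|\psi\|$ goes through, so the proof is repairable as stated; but the detour is unnecessary, since $\|x\|e-|x|\notin X_+$ already yields, by cone separation alone, a positive $x^{*}$ with $x^{*}(|x|)>\|x\|x^{*}(e)=\|x\|\,\|x^{*}\|$ --- which is exactly the contrapositive of the paper's one-line step. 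Your forward direction coincides with the paper's.
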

\begin{proof}
    If $B_X$ denotes the unit ball of $X$ and $e \in X_+$ is an order
    unit, then $B_X=[-e,e]$. For every $x^{*} \in (X^{*})_+$:
    \[
        \|x^{*}\|=\sup_{x \in (B_X)_+} x^{*}(x)=\sup_{x \in [0,e]}
        x^{*}(x)=x^{*}(e).
    \]
    Conversely, if $x^{*}\in (X^{*})_+$ and $x \in (B_X)_+$, by assumption
    \[
    0\le x^{*}(x)\le \|x^{*}\|=x^{*}(e).
    \]
    Since this holds for every $x^{*}\in (X^{*})_+$, it follows that
    $0\le x\le e$. Since $x \in (B_X)_+$ was arbitrary,
    $(B_X)_+\subseteq [0,e]$. Also
    \[
    \|e\|=\sup_{x^{*} \in (B_{X^{*}})_+} x^{*}(e)=\sup_{x^{*} \in
    (B_{X^{*}})_+}\|x^{*}\|=1,
    \]
    so $[0,e]\subseteq (B_X)_+$. It follows that $B_X=[-e,e]$, and so
    $X$ is an AM-space with unit $e$.
\end{proof}

\begin{defn}
    Let $X$ be a Banach lattice, and let $(e_\gamma )\subseteq X_+$ be
    a net. We say that $X$ is an \emph{AM-space with approximate unit
    $(e_\gamma )$} if $x^{*}(e_\gamma )\to \|x^{*}\|$ for every
    $x^{*}\in (X^{*})_+$. We also say that $(e_\gamma )$ is an
    \emph{approximate order unit} of $X$.
\end{defn}

For this definition to make sense, we need to check that spaces with an
approximate order unit are indeed AM-spaces. In fact, this
definition is equivalent to that of an AM-space.

\begin{lem}\label{lem:approx_AM}
    A Banach lattice $X$ has an approximate order
    unit if and only if $X$ is an AM-space.
\end{lem}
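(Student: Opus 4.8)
The plan is to establish the two implications separately; neither is hard, and the only subtle point is a mild surprise concerning unbounded approximate units.

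\emph{AM-space $\Rightarrow$ approximate order unit.} I would exhibit the net explicitly. Let $\Gamma$ be the collection of all nonempty finite subsets of $(B_X)_+:=\{x\in X_+:\|x\|\le 1\}$, directed by inclusion, and for $\gamma=\{x_1,\dots,x_n\}\in\Gamma$ put $e_\gamma:=x_1\vee\dots\vee x_n\in X_+$. An easy induction on $n$, using the defining identity $\|u\vee v\|=\|u\|\vee\|v\|$ of an AM-space for $u,v\in X_+$, yields $\|e_\gamma\|=\max_i\|x_i\|\le 1$, so $e_\gamma\in (B_X)_+$. Now fix $x^{*}\in (X^{*})_+$. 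Since $x^{*}(x)\le x^{*}(|x|)$ with $\||x|\|=\|x\|$, one has $\|x^{*}\|=\sup\{x^{*}(x):x\in (B_X)_+\}$; hence for $\varepsilon>0$ there is $x_0\in (B_X)_+$ with $x^{*}(x_0)>\|x^{*}\|-\varepsilon$, and for every $\gamma\in\Gamma$ with $x_0\in\gamma$ we get $\|x^{*}\|-\varepsilon<x^{*}(x_0)\le x^{*}(e_\gamma)\le\|x^{*}\|$ (the last inequality because $\|e_\gamma\|\le 1$). Thus $x^{*}(e_\gamma)\to\|x^{*}\|$, so $(e_\gamma)_{\gamma\in\Gamma}$ is an approximate order unit.

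\emph{Approximate order unit $\Rightarrow$ AM-space.} Suppose $(e_\gamma)\subseteq X_+$ satisfies $z^{*}(e_\gamma)\to\|z^{*}\|$ for all $z^{*}\in (X^{*})_+$. The crucial observation is that the hypothesis applies to $x^{*}+y^{*}$ whenever it applies to $x^{*}$ and $y^{*}$, since $(X^{*})_+$ is closed under addition: for $x^{*},y^{*}\in (X^{*})_+$,
\[
\|x^{*}+y^{*}\|=\lim_\gamma (x^{*}+y^{*})(e_\gamma)=\lim_\gamma x^{*}(e_\gamma)+\lim_\gamma y^{*}(e_\gamma)=\|x^{*}\|+\|y^{*}\|,
\]
so $X^{*}$ is an AL-space. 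Since a Banach lattice is an AM-space if and only if its dual is an AL-space (classical; see, e.g., \cite[Section~1.b]{lindenstrauss_tzafriri1979}), it follows that $X$ is an AM-space. Alternatively, once $X^{*}$ is an AL-space one may invoke Kakutani's theorem to identify $X^{*}$ with some $L^1(\mu)$, hence $X^{**}$ with $L^\infty(\mu)$, a $C(K)$-space; as $X$ embeds into $X^{**}$ as a closed sublattice and the AM-identity is inherited by closed sublattices, $X$ is an AM-space.

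The only step that requires a moment's thought is recognising that a general approximate order unit need \emph{not} be norm-bounded — for instance, in $C[0,1]$ one can take $e_n=\mathbb{1}+g_n$ with $g_n\ge 0$, $\|g_n\|_\infty=1$ and $g_n\to 0$ weakly, so that $\|e_n\|=2$. This rules out arguments that try to treat $e_\gamma$ as an approximate strong unit, or that divide by $\|e_\gamma\|$, and it is precisely why the proof routes through the dual and applies the hypothesis directly to $x^{*}+y^{*}$. Beyond this point the argument is routine, so I do not anticipate a serious obstacle.
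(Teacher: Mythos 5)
Your proof is correct and follows essentially the same route as the paper: the converse direction (passing to the dual and showing it is an AL-space) is identical, and your forward direction, using finite suprema of elements of the positive unit ball, is just an explicit form of the paper's observation that $(B_X)_+$ is itself an upward-directed net whose image under a positive functional increases to its norm. (Only your closing aside is off: the example $e_n=\mathbb{1}+g_n$ with $\|e_n\|=2$ is norm-bounded, so it shows an approximate order unit need not lie in the unit ball, not that it can be unbounded; genuinely unbounded examples exist but require more work, as in the paper.)
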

\begin{proof}
    Let $(e_\gamma )\subseteq X_+$ be an approximate order unit.
    For $x^{*}, y^{*} \in (X^{*})_+$:
    \begin{align*}
        \|x^{*}+y^{*}\|&=\lim (x^{*}+y^{*})(e_\gamma )\\
                       &=\lim x^{*}(e_\gamma )+\lim
                       y^{*}(e_\gamma )\\
                       &=\|x^{*}\|+\|y^{*}\|.
    \end{align*}
    This proves that $X^{*}$ is an AL-space. Hence $X$ is an AM-space.

    Conversely, suppose that $X$ is an AM-space. The positive
    unit ball $(B_X)_+$ is an increasing net; denote it by $(e_\gamma
    )$. For $x^{*}\in (X^{*})_+$, $x^{*}(e_\gamma )$ is an
    increasing and bounded net of real numbers. As such, its
    limit coincides with its supremum:
    \[
    \lim_\gamma  x^{*}(e_\gamma )=\sup_\gamma  x^{*}(e_\gamma )=\sup_{x
    \in (B_X)_+} x^{*}(x)=\|x^{*}\|.
    \]
    This shows that $(B_X)_+$ is an approximate order unit.
\end{proof}

\begin{rem}
    A priori, we do not assume that approximate order units are bounded, see next example. However, if the approximate order unit is countable, then it must be bounded by the Uniform Boundedness Principle.

    Previous lemma also shows that, if a Banach lattice has an approximate order unit, then it has a bounded approximate order unit (namely, the positive unit ball). In the separable case, one can even find a countable bounded approximate order unit.
\end{rem}

\begin{example}
    The following is an example of an approximate order unit in
    $C[0,1]$ with no bounded tails. Denote by $M[0,1]$ the dual of $C[0,1]$, and by
    \[
        U(\mu_1,\ldots,\mu_n;\varepsilon)=\{\, f\in C[0,1] :
        |\mu_i(f)|<\varepsilon\text{ for }i=1,\ldots,n\,\},
    \]
    where $\mu _1,\ldots ,\mu _n \in M[0,1]$ and $\varepsilon >0$.
    These form a basis of neighborhoods at $0$ for the weak topology. Define
    \[
    \mathcal{U}=\{\, U(\mu _1,\ldots ,\mu _n;\varepsilon ) : \mu _1,\ldots
    ,\mu _n \in M[0,1],\; \varepsilon >0\, \}.
    \]
    For every $(U,n)\in \mathcal{U}\times \N$, we are going to construct a
    continuous function $f_{(U,n)} \in \mathcal{U}$ with
    $\|f_{(U,n)}\|_\infty =n$. Suppose $U=U(\mu _1,\ldots ,\mu _k;\varepsilon
    )$, and define $\mu = |\mu _1|+ \cdots +|\mu _k|$. Decompose $\mu =\mu
    _c+\mu _d$, where $\mu _c$ and $\mu _d$ are the continuous and
    discrete parts of $\mu $, respectively. The support of $\mu _d$ is at
    most countable, say $\supp(\mu _d)=\{p_i\}_{i=1}^{\infty }$. Choose
    $N \in \N$ such that $\mu _d(\{p_i\}_{i=N}^{\infty })< \varepsilon
    /(2n)$. Since $\mu _c$ is continuous, there exists a non-empty open set
    $V\subseteq [0,1]\setminus\{p_i\}_{i=1}^{N}$ such that $\mu
    _c(V)<\varepsilon /(2n)$. Fix an arbitrary point $x_0 \in V$. By
    Urysohn's Lemma, there exists a continuous function $f\colon [0,1]\to
    [0,1]$ such that $0\le f\le \mathbb{1}$, $\supp(f)\subseteq V$ and
    $f(x_0)=1$. Then $f_{(U,n)}=nf$ is such that $\|f_{(U,n)}\|_\infty =n$
    and
    \[
    |\mu _i(f_{(U,n)})|\le n\mu (f)\le n \mu (V)<\varepsilon\quad \text{for }i=1,\ldots,k
    \]
    so $f_{(U,n)} \in U$.
    
    If we order $\mathcal{U}\times \N$ by setting $(U,n)\le (V,m)$ if and
    only if $V\subseteq U$ and $n\le m$, then the net
    $\{f_{(U,n)}\}_{(U,n)\in \mathcal{U}\times \N}$ weakly converges to
    zero and is unbounded. Indeed, if $W$ is a weak neighbourhood of $0$,
    there exists $U_0 \in \mathcal{U}$ such that $U_0\subseteq W$, and then
    $f_{(U,n)} \in W$ for every $(U,n)\ge (U_0,1)$. Yet $f_{(U,n)}$ is
    unbounded as we increase $n$; in particular, it has no bounded tail. It follows that
    $\{\mathbb{1}+f_{(U,n)}\}_{(U,n)\in \mathcal{U}\times \N}$ is an
    approximate order unit with no bounded tail.
\end{example}

Of course, AM-spaces with unit are the same as AM-spaces with an
approximate order unit that is constant. The following is another
characterization of approximate order units that will be useful later.

\begin{lem}\label{lem:approx_id_bidual}
    Let $X$ be a Banach lattice. A net $(e_\gamma )\subseteq X_+$ is
    an approximate order unit if and only if $X^{**}$ is lattice isometric
    to $C(K)$, for a certain compact Hausdorff space $K$, and $e_\gamma
    \xrightarrow{w^{*}} \mathbb{1}_K$.
\end{lem}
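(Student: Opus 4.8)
The plan is to deduce the lemma from the unital case of Kakutani's theorem (\cref{thm:kakutani}), the classical AM--AL duality, and \cref{lem:def_approx_unit} applied to $X^{**}$. Recall that the weak$^{*}$ topology on $X^{**}=(X^{*})^{*}$ is $\sigma(X^{**},X^{*})$, so that for a net $(e_\gamma)$ in $X\subseteq X^{**}$ and $\xi\in X^{**}$ one has $e_\gamma\xrightarrow{w^{*}}\xi$ precisely when $x^{*}(e_\gamma)\to\langle\xi,x^{*}\rangle$ for every $x^{*}\in X^{*}$; and since every functional is a difference of positive ones, it suffices to test this for $x^{*}\in(X^{*})_+$.

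For the ``only if'' implication I would argue as follows. If $(e_\gamma)\subseteq X_+$ is an approximate order unit, then $X$ is an AM-space by \cref{lem:approx_AM}, hence $X^{*}$ is an AL-space, and therefore $X^{**}$ is an AM-space with unit $u\in(X^{**})_+$; by the unital case of \cref{thm:kakutani} there is a compact Hausdorff space $K$ and a lattice isometry $X^{**}\cong C(K)$ carrying $u$ to $\mathbb{1}_K$. Now $B_{X^{**}}=[-u,u]$, and $X^{*}$ embeds isometrically and as a sublattice into $X^{***}=(X^{**})^{*}$, so the computation in the proof of \cref{lem:def_approx_unit}, carried out for $X^{**}$, gives $\langle u,x^{*}\rangle=\|x^{*}\|$ for every $x^{*}\in(X^{*})_+$. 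Combining this with $x^{*}(e_\gamma)\to\|x^{*}\|$, which is the definition of an approximate order unit, yields $x^{*}(e_\gamma)\to\langle u,x^{*}\rangle$ for all $x^{*}\in(X^{*})_+$ and hence for all $x^{*}\in X^{*}$; that is, $e_\gamma\xrightarrow{w^{*}}u$.

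The ``if'' implication is then short. Assume $X^{**}$ is lattice isometric to $C(K)$, that $(e_\gamma)\subseteq X_+$, and that $e_\gamma\xrightarrow{w^{*}}\mathbb{1}_K$. As above, the order unit $\mathbb{1}_K$ of $X^{**}$ satisfies $\langle\mathbb{1}_K,x^{*}\rangle=\|x^{*}\|$ for every $x^{*}\in(X^{*})_+$, so weak$^{*}$ convergence gives $x^{*}(e_\gamma)=\langle e_\gamma,x^{*}\rangle\to\langle\mathbb{1}_K,x^{*}\rangle=\|x^{*}\|$ for such $x^{*}$, i.e.\ $(e_\gamma)$ is an approximate order unit.

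I do not expect a genuine obstacle: the argument is mostly bookkeeping, resting on one non-formal input, namely that the dual of an AL-space is an AM-space with unit (so that \cref{thm:kakutani} applies to $X^{**}$). If one prefers not to quote this, the unit is the functional $u(x^{*})=\|(x^{*})^{+}\|-\|(x^{*})^{-}\|$ on the AL-space $X^{*}$, which is linear because the norm is additive on $(X^{*})_+$, has norm one with $\langle u,x^{*}\rangle=\|x^{*}\|$ on $(X^{*})_+$, and satisfies $B_{X^{**}}=[-u,u]$ by a short computation with the modulus of a functional. The only remaining care is notational: tracking which dual each functional lives in, and remembering that weak$^{*}$ convergence in $X^{**}$ is tested against $X^{*}$, not $X^{***}$.
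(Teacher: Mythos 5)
Your proof is correct and follows essentially the same route as the paper's: reduce to the AM-space structure via \cref{lem:approx_AM}, identify $X^{**}$ with $C(K)$ by Kakutani, observe that the unit $\mathbb{1}_K$ satisfies $\langle\mathbb{1}_K,x^{*}\rangle=\|x^{*}\|$ for all $x^{*}\in(X^{*})_+$ (the paper phrases this via $B_{X^{**}}=[-\mathbb{1}_K,\mathbb{1}_K]$, you via \cref{lem:def_approx_unit} applied to $X^{**}$, which is the same computation), and then read off the equivalence with weak$^{*}$ convergence. Your write-up just spells out the converse and the positivity/isometry bookkeeping that the paper leaves as ``follows easily''.
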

\begin{proof}
    If $(e_\gamma )$ is an approximate order unit, $X$ is an AM-space by
    \cref{lem:approx_AM}, and therefore its bidual, being an AM-space
    with unit, is lattice isometric to $C(K)$ for a certain compact
    Hausdorff space $K$. Moreover, the element $\mathbb{1}_K$ is such
    that $\mathbb{1}_K(x^{*})=\|x^{*}\|$ for all $x^{*}\in (X^{*})_+$,
    because $B_{X^{**}}=[-\mathbb{1}_K,\mathbb{1}_K]$.
    Then $x^{*}(e_\gamma )\to \|x^{*}\|=\mathbb{1}_K(x^{*})$ for all
    $x^{*}\in (X^{*})_+$; in other words,
    $e_\gamma\to \mathbb{1}_K$ in the weak$^*$ topology. The converse follows
    easily from this last observation.
\end{proof}

The algebraic counterpart of approximate order units are approximate
algebraic identities. A net $(e_\gamma )$ in a Banach algebra $A$ is
said to be a \emph{left} (resp.\ \emph{right}) \emph{approximate
(algebraic) identity} if $e_\gamma x\to x$ (resp.\ $xe_\gamma \to x$)
for all $x \in A$. Of course, this is a standard definition in Banach
algebras (see \cite[Chapter 5]{palmer1994}). When we omit the left or right specification, it
means that it is both a left and a right approximate identity.

Putting together the notions of approximate order unit and
approximate algebraic identity,
we get the approximate analogue of AM-algebras with unit.

\begin{defn}
    Let $A$ be a Banach lattice algebra and let $(e_\gamma )\subseteq
    A_+$. We say that $A$ is an \emph{AM-algebra with approximate unit
    $(e_\gamma )$} if $(e_\gamma )$ is both an approximate order unit
    and an approximate algebraic identity.
\end{defn}

\begin{example}\label{ex:approxAMalg}
    \begin{enumerate}
        \item An AM-algebra with unit $e$ is also an AM-algebra with
            approximate unit, in which the approximate unit is the
            constantly $e$ sequence.
        \item The Banach lattice $c_0$ with supremum norm and
            coordinatewise order and product is an AM-algebra with
            approximate unit $\big(\sum_{i=1}^{n}e_i\big)_n$, where
            $e_i(i)=1$ and $e_i(j)=0$ for $i\neq j$.
        \item Any Banach lattice algebra with identity that is not an
            AM-space shows that we can have approximate algebraic
            identities without having approximate order units. This is the case for the Banach lattice algebra
            $\el 1(\Z)$, where the product is given by convolution.
        \item If $X$ is an AM-space, we can always endow it with the
            identically zero product, and it will become a Banach
            lattice algebra with an approximate order unit that has no
            approximate algebraic identity (unless $X=\{0\}$).
        \item Even if a Banach lattice algebra has an approximate order unit and
            an approximate algebraic
            identity,
            it may not be an AM-algebra with approximate unit, see
            \cref{ex:nonAMalg}.
    \end{enumerate}
\end{example}

AM-algebras with approximate unit are nothing but the closed
sublattices of $C(K)$ that are, at the same time, subalgebras.

\begin{thm}\label{thm:approxAMalg}
    A Banach lattice algebra $A$ is an AM-algebra with approximate
    unit if and only if it is lattice and algebra isometric to a closed
    sublattice-algebra of $C(K)$, for a certain compact
    Hausdorff $K$.
\end{thm}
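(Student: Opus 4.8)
The plan is to prove the two implications of \cref{thm:approxAMalg} separately, the substantial one passing through the bidual.

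\medskip

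\emph{From a representation to the abstract property.} Suppose $A$ is a closed sublattice-algebra of $C(K)$. Being a closed sublattice of an AM-space, $A$ is an AM-space, so by \cref{lem:approx_AM} the positive unit ball $(B_A)_+$ is a bounded (increasing) approximate order unit; what is missing is an approximate \emph{algebraic} identity that is simultaneously an approximate order unit. First I would build an approximate algebraic identity inside $(B_A)_+$ by functional calculus: for $f\in A_+$ and $n\ge 1$ the function $\varphi_n(t)=\min(nt,1)$ satisfies $\varphi_n(0)=0$, hence on $[0,\|f\|_\infty]$ it is a uniform limit of polynomials without constant term, so $\varphi_n(f)\in A$ (as $A$ is a closed subalgebra), $0\le\varphi_n(f)\le\mathbb 1_K$, and $\|\varphi_n(f)f-f\|_\infty\le 1/n$; replacing $f$ by $\bigvee_{f\in S}|f|$ deals with an arbitrary finite $S\subseteq A$ at once, producing an approximate algebraic identity $(u_\gamma)\subseteq(B_A)_+$. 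This need not be an approximate order unit, so I would pass to the net of pointwise suprema $u_\gamma\vee v$, $v\in(B_A)_+$: it stays in $(B_A)_+$; it remains an approximate algebraic identity because $(a\vee b)c=ac\vee bc$ holds in $C(K)$ for $c\ge 0$ and $0\le v\le\mathbb 1_K$ forces $vf\le f$ for $f\ge 0$; and it is an approximate order unit because $x^*(v)\le x^*(u_\gamma\vee v)\le\|x^*\|\,\|u_\gamma\vee v\|_\infty\le\|x^*\|$ for $x^*\in(A^*)_+$, while $x^*(v)\to\|x^*\|$. Thus $A$ is an AM-algebra with approximate unit.

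\medskip

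\emph{From the abstract property to a representation.} Let $A$ be an AM-algebra with approximate unit $(e_\gamma)$. By \cref{lem:approx_AM,lem:approx_id_bidual}, $A^{**}$ is lattice isometric to $C(K)$ for some compact Hausdorff $K$ with $e_\gamma\xrightarrow{w^*}\mathbb 1_K$. I would equip $A^{**}$ with the first Arens product $\Box$, a Banach algebra product extending that of $A$, and check two things. First, $(A^{**},\Box)$ is a Banach lattice algebra: $A^{**}_+$ is weak-* closed and equals the weak-* closure of $\mathbb R_+\,\widehat{(B_A)_+}$ (a routine separation argument using that $A^*$ is an AL-space), so from $ab\ge0$ for $a,b\in A_+$ and the separate weak-* continuity of $\Box$ in the appropriate variables, a double passage to the limit gives $m\Box n\ge0$ for $m,n\in A^{**}_+$. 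Second, $\mathbb 1_K$ should be the identity of $\Box$: from $xe_\gamma\to x$, $e_\gamma\xrightarrow{w^*}\mathbb 1_K$, and weak-* continuity of $m\mapsto m\Box n$ one gets $m\Box\mathbb 1_K=m$ for all $m\in A^{**}$, and symmetrically $\mathbb 1_K$ is a \emph{left} identity for the second Arens product $\Diamond$. Granting that $\mathbb 1_K$ is a two-sided identity for $\Box$, the space $(A^{**},\le,\Box)$ is $C(K)$ with pointwise order carrying a vector lattice algebra structure with identity $\mathbb 1_K$, so by \cref{prop:martignon} (equivalently \cref{thm:AMalg}) $\Box$ is the pointwise multiplication. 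Finally the canonical embedding $A\hookrightarrow A^{**}$ is an isometric lattice homomorphism onto a closed sublattice and an algebra homomorphism (the Arens product extends that of $A$), so $A$ is lattice and algebra isometric to the closed sublattice-algebra $\widehat A$ of $C(K)=A^{**}$.

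\medskip

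\emph{The main obstacle.} The crux is the step from ``$\mathbb 1_K$ is a right identity for $\Box$ and a left identity for $\Diamond$'' to ``$\mathbb 1_K$ is a two-sided identity for $\Box$'': the weak-* continuity of Arens multiplication is only one-sided, and a positive associative product on $C(K)$ for which $\mathbb 1_K$ is merely one-sided need not be pointwise — for instance on $\mathbb R^2=C(\{1,2\})$ the product $(a,b)\star(c,d)=\tfrac{c+d}{2}(a,b)$ is a vector lattice algebra structure with $\mathbb 1$ a right identity but not the pointwise one. It is exactly the two-sided approximate-identity hypothesis that rules this out, and the clean way to use it is to show that $A$ is Arens regular, so $\Box=\Diamond$ and $\mathbb 1_K$ is automatically two-sided: since $A$ is an AM-space, $A^*$ and $A^{***}$ are AL-spaces, hence contain no copy of $c_0$, so every operator from $A^{**}\cong C(K)$ into $A^{***}$ is weakly compact; thus $T^{**}$, and therefore $T$, is weakly compact for every bounded $T\colon A\to A^*$, which gives Arens regularity of $A$. (Alternatively, after replacing $(e_\gamma)$ by the bounded approximate unit built in the first part, one can argue directly, using that order intervals of the AL-space $A^*$ are weakly compact.) The positivity of $\Box$ and the weak-* density statement are routine but should be written out with some care.
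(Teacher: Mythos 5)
Your proof is correct and follows essentially the same route as the paper: represent $A^{**}$ as $C(K)$ with $e_\gamma\xrightarrow{w^*}\mathbb{1}_K$ (\cref{lem:approx_AM,lem:approx_id_bidual}), endow it with the Arens product, use the two-sided approximate identity together with Arens regularity to see that $\mathbb{1}_K$ is a genuine identity (\cref{prop:unit_arens}), and then invoke \cref{prop:martignon} to identify the product with pointwise multiplication, so that $j(A)$ is a closed sublattice-algebra of $C(K)$. The only genuine divergence is the Arens regularity step: the paper quotes the Buskes--Page theorem (\cref{thm:arensreg}), noting that $\ell_1$ does not lattice embed in an AM-space, whereas you derive regularity from weak compactness of every bounded operator $A\to A^{*}$, via Pe\l czy\'nski's theorem applied to $T^{**}\colon C(K)\to A^{***}$ and the standard weak-compactness criterion for Arens regularity; both arguments are valid, yours being more self-contained at the price of invoking that criterion, the paper's being a one-line citation tailored to positive bilinear maps. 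In the converse direction you write out explicitly (functional calculus plus taking suprema with the positive unit ball) what the paper disposes of by citing the standard C$^*$-algebra approximate-identity argument and \cref{lem:approx_AM}; your identification of the one-sided identity issue, with the counterexample on $\mathbb{R}^2$, is exactly the subtlety the paper's appeal to Arens regularity is designed to remove.
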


For the proof of this theorem we need to introduce some properties of the
Arens products. Given Banach spaces $A$, $B$ and $C$, and a bounded bilinear map $P\colon A\times B\to C$, its
\emph{Arens adjoint} is the bounded bilinear map $P^{*}\colon
C^{*}\times A\to B^{*}$ defined by $P^{*}(\phi ,a)(b)=\phi (P(a,b))$
for every $a \in A$, $b \in B$ and $\phi \in C^{*}$; when $A=B=C$, its \emph{transpose} is the
bounded bilinear map $P^{t}\colon A\times A\to A$ defined by
$P^{t}(b,a)=P(a,b)$ for all $a,b \in A$. The bilinear map $P$ is
called \emph{Arens regular} if $P^{t * * * t}=P^{* * *}$.

In the particular case that $A$ is a Banach algebra with
product $P\colon A\times A\to A$, both $P^{* * *}$ and $P^{t * * * t}$
define products on $A^{* *}$, called the \emph{first} and \emph{second
Arens product}, respectively.
When $A$ is a Banach lattice algebra, the first and second Arens
product make $A^{* *}$ into a Banach lattice algebra with the usual
lattice structure (see \cite{huijsmans_depagter1984}). These products extend
$P$, in the sense that the canonical isometry
$A\to A^{* *}$ is an algebra homomorphism. When $P$ is Arens
regular, the first and second Arens product coincide, and the algebra
$A$ is said to be \emph{Arens regular}.

It turns out that identities for the Arens products are closely related to
approximate identities in the original algebra.

\begin{prop}[{\cite[Proposition 5.1.9]{palmer1994}}]\label{prop:unit_arens}
    Let $A$ be a Banach algebra, with canonical embedding $j\colon
    A\to A^{* *}$, and let $(e_\gamma )$ be a left
    (resp.\ right) approximate identity. If $j(e_\gamma)$ weak$^*$
    converges to some $e \in A^{* *}$, then $e$ is a left (resp.\
    right) identity for the second (resp.\ first) Arens product.
\end{prop}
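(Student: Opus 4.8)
The plan is to prove the statement by directly unwinding the definitions of the Arens adjoint and of the first Arens product recalled above; the whole argument is then a short computation, the only real care being the bookkeeping of adjoints. I treat the case of a right approximate identity and the first Arens product; the case of a left approximate identity and the second Arens product is entirely symmetric (replace the product $P$ by its transpose $P^{t}$ throughout). Writing $P$ for the product of $A$ and $\langle\cdot,\cdot\rangle$ for the various dualities, the definition of the Arens adjoint unravels to
\[
    \langle P^{*}(\phi,a),b\rangle=\langle\phi,ab\rangle,\qquad
    \langle P^{**}(F,\phi),a\rangle=\langle F,P^{*}(\phi,a)\rangle,\qquad
    \langle P^{***}(G,F),\phi\rangle=\langle G,P^{**}(F,\phi)\rangle,
\]
for $a,b\in A$, $\phi\in A^{*}$ and $F,G\in A^{**}$. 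Denoting the first Arens product by $G\,\square\,F:=P^{***}(G,F)$, our goal is to show that $e$ is a right identity, i.e.\ that $G\,\square\,e=G$ for every $G\in A^{**}$.

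The key step is to show that $P^{**}(e,\phi)=\phi$ as elements of $A^{*}$, for every $\phi\in A^{*}$; granting this, the rightmost identity above gives $\langle G\,\square\,e,\phi\rangle=\langle G,P^{**}(e,\phi)\rangle=\langle G,\phi\rangle$ for all $G$ and $\phi$, whence $G\,\square\,e=G$, as desired. To establish $P^{**}(e,\phi)=\phi$, fix $a\in A$ and compute $\langle P^{**}(e,\phi),a\rangle=\langle e,P^{*}(\phi,a)\rangle$. Since $j(e_\gamma)$ weak$^{*}$ converges to $e$, we have $\langle e,\psi\rangle=\lim_\gamma\langle\psi,e_\gamma\rangle$ for every $\psi\in A^{*}$; applying this with $\psi=P^{*}(\phi,a)$ and using $\langle P^{*}(\phi,a),e_\gamma\rangle=\langle\phi,ae_\gamma\rangle$ yields $\langle P^{**}(e,\phi),a\rangle=\lim_\gamma\langle\phi,ae_\gamma\rangle$. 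Finally, because $(e_\gamma)$ is a right approximate identity we have $ae_\gamma\to a$ in norm, and continuity of $\phi$ gives $\lim_\gamma\langle\phi,ae_\gamma\rangle=\langle\phi,a\rangle$. Thus $\langle P^{**}(e,\phi),a\rangle=\langle\phi,a\rangle$ for every $a\in A$, i.e.\ $P^{**}(e,\phi)=\phi$.

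I do not expect any genuine obstacle here: once the definitions are written out, the proof is a two-line calculation resting on weak$^{*}$ convergence together with the (norm) convergence $ae_\gamma\to a$. The only point demanding attention is the correct pairing of \emph{right} with \emph{first} and \emph{left} with \emph{second}, which is forced by the asymmetry in the definitions of the Arens adjoint and transpose; in the symmetric case one replaces $P$ by $P^{t}$, so that the relevant module action becomes $\langle P^{t*}(\phi,a),b\rangle=\langle\phi,ba\rangle$, and the approximate identity is used through $e_\gamma a\to a$ instead. Note also that only weak convergence of $ae_\gamma$ to $a$ is actually needed, so the norm approximate identity hypothesis is somewhat more than the argument consumes.
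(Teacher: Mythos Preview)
Your proof is correct and is essentially the standard argument; the paper does not give its own proof of this proposition but merely cites \cite[Proposition 5.1.9]{palmer1994}, so there is nothing to compare against. The bookkeeping with the iterated Arens adjoints is right, and your handling of the left/second case via $P^{t}$ is the expected symmetric variant.
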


Arens regularity of bilinear operators and algebras has been
extensively studied in the literature. In this line,
G.\ Buskes and R.\ Page provided in \cite{buskes_page2005} several equivalent conditions for all
bilinear and positive operators defined on a Banach lattice to be
Arens regular. Here we will only need the following one.

\begin{thm}[{\cite{buskes_page2005}}]\label{thm:arensreg}
    Let $E$ be a Banach lattice. Every positive bilinear map $P\colon
    E\times E\to E$ is Arens regular if and only if $\el 1$ does not lattice
    embed in $E$.
\end{thm}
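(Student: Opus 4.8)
The plan is to reduce the statement to a weak-compactness property of positive operators on $E$, and then to resolve that property through a disjointification argument governed by a Kakutani-type dichotomy for $\el 1$. First I would pass from vector-valued to scalar-valued maps: a bounded bilinear map $P\colon E\times E\to E$ is Arens regular if and only if the scalar form $\phi\circ P$ is Arens regular for every $\phi\in E^{*}$, since the difference $P^{***}(\Xi,\mathrm{H})-P^{t***t}(\Xi,\mathrm{H})$ is an element of $E^{**}$ which annihilates $E^{*}$ precisely when all the forms $\phi\circ P$ agree. Writing $E^{*}=(E^{*})_{+}-(E^{*})_{+}$ and using positivity of $P$, it then suffices to treat \emph{positive} bilinear forms $\psi\colon E\times E\to\R$. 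For such a form I would invoke the classical criterion identifying Arens regularity of a bounded bilinear form with weak compactness of the associated operator $T_{\psi}\colon E\to E^{*}$ determined by $\langle T_{\psi}x,y\rangle=\psi(x,y)$ (equivalently, Grothendieck's iterated-limit condition). Since $\psi\ge 0$ forces $T_{\psi}\ge 0$, the theorem becomes the assertion that \emph{every positive operator $E\to E^{*}$ is weakly compact if and only if $\el 1$ does not lattice embed in $E$}.

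The technical core is a lattice lemma that I would establish first: \emph{$\el 1$ does not lattice embed in $E$ if and only if every norm-bounded disjoint sequence in $E_{+}$ is weakly null} (equivalently, $E^{*}$ has order continuous norm). For the nontrivial direction, suppose a bounded disjoint $(x_{n})\subseteq E_{+}$ is not weakly null; after passing to a subsequence there is $f\in(E^{*})_{+}$ with $f(x_{n})\ge\delta>0$. Disjointness yields $\bigl\|\sum a_{n}x_{n}\bigr\|=\bigl\|\sum|a_{n}|x_{n}\bigr\|\ge\|f\|^{-1}\sum|a_{n}|f(x_{n})\ge \delta\|f\|^{-1}\sum|a_{n}|$, while the triangle inequality gives $\bigl\|\sum a_{n}x_{n}\bigr\|\le(\sup_{n}\|x_{n}\|)\sum|a_{n}|$; so $(x_{n})$ is a disjoint sequence equivalent to the $\el 1$-basis, i.e.\ a lattice copy of $\el 1$. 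Conversely, if $\iota\colon\el 1\to E$ is a lattice embedding, then $\iota^{*}\colon E^{*}\to\el\infty$ is onto, and testing the disjoint positive sequence $(\iota e_{n})$ against a functional mapping to the constant sequence shows it is not weakly null.

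For the main implication ($\el 1$ not lattice-embedding $\Rightarrow$ all positive forms Arens regular) I would argue by contraposition: given a positive $T_{\psi}\colon E\to E^{*}$ that is \emph{not} weakly compact, I would produce a bounded positive disjoint sequence in $E$ that is not weakly null, contradicting the lemma. Concretely, the failure of weak compactness (equivalently, $T_{\psi}^{**}$ escaping $E^{*}$, or an asymmetric iterated limit $\lim_{n}\lim_{m}\psi(x_{n},y_{m})\neq\lim_{m}\lim_{n}\psi(x_{n},y_{m})$ on bounded positive sequences) should be funneled, by a sliding-hump/disjointification on the Banach lattice, into a bounded positive disjoint sequence carrying a uniform mass. \textbf{I expect this extraction to be the main obstacle:} a sequence witnessing non-weak-compactness may be merely weakly Cauchy (as happens in $c_{0}$, where nonetheless every positive form is regular), so the asymmetry itself—not just absence of a weakly convergent subsequence—must be exploited to force genuine disjoint structure. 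I would handle this by combining the subsequence-splitting/disjointification machinery for Banach lattices with the order continuity of $E^{*}$ to discard the order-equicontinuous part and retain a disjoint tail that the lemma forbids.

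For the converse I would build the counterexample from a lattice copy of $\el 1$, realized by a disjoint $(x_{n})\subseteq E_{+}$ with $\delta\sum|a_{n}|\le\|\sum a_{n}x_{n}\|\le M\sum|a_{n}|$. Let $G_{n}\in(E^{*})_{+}$ be positive functionals with $G_{n}(x_{m})=1$ for $m\ge n$ and $0$ for $m<n$; on the copy these correspond to the uniformly bounded tails $\mathbb{1}_{[n,\infty)}\in\el\infty$, so by the $\el 1$-estimates the assignment $x_{n}\mapsto G_{n}$ extends to a bounded positive operator on $\overline{\barespn}(x_{n})$. The resulting positive form then satisfies $\psi(x_{n},x_{m})=1$ for $m\ge n$ and $0$ otherwise, whence $\lim_{n}\lim_{m}\psi(x_{n},x_{m})=1\neq 0=\lim_{m}\lim_{n}\psi(x_{n},x_{m})$, so $\psi$ is not Arens regular and, via $P(x,y)=\psi(x,y)\,w$ for a fixed $w\in E_{+}$, neither is some positive $E$-valued map. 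The delicate point here is extending the operator positively from the copy to all of $E$; I would carry this out using the Dedekind completeness of the dual lattice $E^{*}$ (a Kantorovich-type positive extension), or equivalently by arranging $(x_{n})$ to span a positively complemented sublattice through a conditional-expectation-type projection.
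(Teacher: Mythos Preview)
The paper does not prove this theorem at all: it is quoted from \cite{buskes_page2005} as an external result and used as a black box in the proof of \cref{thm:approxAMalg}. There is therefore no ``paper's own proof'' to compare against.

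Your outline is nonetheless essentially the standard route (and close to what Buskes--Page actually do): reduce vector-valued regularity to scalar forms via composition with $\phi\in E^{*}$, identify Arens regularity of a positive form $\psi$ with weak compactness of the associated positive operator $T_{\psi}\colon E\to E^{*}$, and then characterize the latter through the lemma ``$\el 1$ does not lattice embed in $E$ $\Leftrightarrow$ every bounded disjoint positive sequence is weakly null $\Leftrightarrow$ $E^{*}$ has order continuous norm''. Both reductions and the lemma are correct as stated, and your construction for the converse direction is fine; the positive extension you worry about is indeed available because $E^{*}$ is Dedekind complete, so Kantorovich applies.

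The one genuine soft spot is exactly where you flag it: the step ``$T_{\psi}$ not weakly compact $\Rightarrow$ some bounded disjoint positive sequence in $E$ fails to be weakly null''. Your proposed sliding-hump extraction is vague and, as you note, a merely weakly Cauchy witnessing sequence will not by itself yield disjoint structure. A cleaner way to close this is to bypass the extraction entirely and invoke the known Banach lattice fact that if both $E^{*}$ and $F$ have order continuous norm, then every positive operator $E\to F$ is weakly compact; taking $F=E^{*}$, the single hypothesis ``$\el 1$ does not lattice embed in $E$'' supplies both conditions at once. If you want to argue this from scratch, the standard route produces, from a non-weakly-compact positive $T$, disjoint bounded sequences $(x_n)\subseteq E_{+}$ and $(g_n)\subseteq F^{*}_{+}$ with $g_n(Tx_n)\ge\delta$, and order continuity of $E^{*}$ (respectively of $F$) then forces $(x_n)$ weakly null and $(g_n)$ weak$^{*}$ null, which together contradict $g_n(Tx_n)\ge\delta$. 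Either way, the gap is closable, but as written your forward direction is not yet a proof.
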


We are now ready to prove \cref{thm:approxAMalg}.

\begin{proof}[Proof of \cref{thm:approxAMalg}]
    Let $A$ be an AM-algebra with approximate unit $(e_\gamma
    )\subseteq A_+$. Since $A$ is an AM-space, its bidual $A^{**}$ is
    lattice isometric to $C(K)$ for a certain compact Hausdorff $K$.
    From \cref{thm:arensreg} it follows that $A$ is Arens regular,
    because, being an AM-space, $\el 1$ does not embed, as a lattice, in $A$. Endow
    $A^{* *}$ with the Arens product. Then $A^{* *}$ is a Banach
    lattice algebra, and the canonical isometric embedding $j\colon
    A\to A^{* *}$ is both a lattice and algebra homomorphism.

    By \cref{lem:approx_id_bidual}, $j(e_\gamma
    )\xrightarrow{w^{*}}\mathbb{1}_K$ in $C(K)$. At the same time, by
    \cref{prop:unit_arens},
    we have that $\mathbb{1}_K$ is the identity of $A^{* *}$, because
    both Arens products coincide, and $\mathbb{1}_K$ is the $w^{*}$-limit of the
    two-sided approximate identity $(e_\gamma )$. Recapitulating, we have
    that $A^{* *}$ is lattice
    isometric to $C(K)$, and we have endowed this space with a
    product, the Arens product, that makes it into a Banach lattice
    algebra with identity $\mathbb{1}_K$. According to
    \cref{prop:martignon}, this product can only be the pointwise
    product, so $A^{* *}$ is both lattice and algebra isometric to
    $C(K)$.

    Conversely, if $A$ is a closed sublattice-algebra of
    $C(K)$, then the positive unit ball $(B_A)_+$ is an increasing net
    that is an approximate order unit by \cref{lem:approx_AM}. It is
    also an approximate
    algebraic identity, by the standard argument of C*-algebra theory
    (see \cite[Theorem I.4.8]{davidson1996} and note that, even though the
    natural scalar field for C*-algebras is \C, the same argument works for closed subalgebras of a real $C(K)$). Hence $A$ is an AM-algebra with approximate unit.
\end{proof}

At this point, the proof of \cref{thm:generalAMalg} is almost done.
The remaining details are completed after the restatement of the
theorem.

\generalAMalg*
\begin{proof}
    $(i)$ is contained in \cref{thm:AMalg}. To prove $(ii)$, let $A$ be
    an AM-algebra with approximate unit. According to
    \cref{thm:approxAMalg}, we can see $A$ as a closed sublattice-algebra of $C(L)$, for a certain compact Hausdorff $L$. Since
    $A$ is a closed sublattice of $C(L)$, in virtue of another result
    by S.\ Kakutani (see \cite[Theorem 3]{kakutani1941}), there exists a family of
    pairs of points $\{(t_i,s_i)\}_{i \in I}\subseteq L\times L$ and
    scalars $\{\lambda _i\}_{i \in I} \subseteq [0,1]$ such that
    \[
    A=\{\, f\in C(L) : f(t_i)=\lambda _if(s_i)\text{ for all }i \in I
    \, \}.
    \]
    Given $i \in I$, if $f \in A$, then $f^2(t_i)=f(t_i)^2=\lambda
    _i^2f^2(s_i)$, but since also $f ^2 \in A$, $f^2(t_i)=\lambda _i
    f^2(s_i)$, and we must have $\lambda _i^2f(s_i)^2=\lambda _if(s_i)^2$.
    This only leaves three possibilities: either $f(s_i)=0$ for
    every $f \in A$, in which case $f(t_i)=0=f(s_i)$, or $\lambda _i=0$,
    in which case $f(t_i)=0$, or $\lambda _i=1$, in which case
    $f(t_i)=f(s_i)$.

    Collecting these cases, one gets a family of pairs of points $\{(u_j,v_j)\}_{j \in J}\subseteq L\times L$ and a family of points $\{w_p\}_{p \in P}\subseteq L$ such that
    \[
    A=\{\, f\in C(L) : f(u_j)=f(v_j)\text{ and }f(w_p)=0\text{ for all }j \in J, p \in P\, \}.
    \]
    To obtain the desired result, it only remains to ``glue together'' $u_j$ and $v_j$ for every $j \in J$. Even though this is a standard procedure, we work out the details for the sake of completeness.
    
    Consider the relation defined by $\{(u_j,v_j)\}_{j \in J}$ in $L$. Equivalently, $t\sim s$ in $L$ if and only if $f(t)=f(s)$ for all $f\in A$. This is certainly an equivalence relation. Consider the set $K=L/\sim$ with the quotient topology, and let $Q\colon L\to K$ be the quotient map. Note that for each $f \in A$, the map $\tilde f\colon K\to \R$ defined by $\tilde f(Qs)=f(s)$ is well-defined and continuous. Whenever $Qs\neq Qt$, there exists $f\in A$ such that $f(s)\neq f(t)$. Hence, $\tilde f(Qs)\neq \tilde f(Qt)$, which in particular implies that $K$ is Hausdorff. Since $K=Q(L)$, and $Q$ is continuous, it also follows that $K$ is compact.

    There is an isometric embedding
    \[
    \begin{array}{cccc}
    T\colon& A & \longrightarrow & C(K) \\
            & f & \longmapsto & \tilde f \\
    \end{array}.
    \]
    Indeed, $T$ is certainly linear and
    \[
    \sup_{Q(s) \in K}|\tilde f(Q(s))|=\sup_{s\in L}|f(s)|.
    \]
    Also,
    \[
    |\tilde f|(Qs)=|\tilde f (Qs)|=|f(s)|=|f|(s)\text{ and }\widetilde{fg}(Qs)=(fg)(s)=f(s)g(s)=\tilde f(Qs) \tilde g(Qs),
    \]
    for all $f,g \in A$ and $s \in L$, so $T$ is a lattice and algebra homomorphism. Let now $F$ be the closure of $\{Qw_p:p\in P\}\subset K$. It is straightforward to check that
    \[
    T(A)=\{\, g\in C(K) : g(t)=0\text{ for all }t \in F\, \}.\qedhere
    \]
\end{proof}

\begin{rem}
Previous result could also be compared to other works which study embeddings of an (ordered) Banach algebra as an (ordered) subalgebra of $C(K)$ such as \cite{Render92}. In \cite[Corollary 3.5]{Render92}, H.\ Render shows that a Banach algebra with a bounded approximate identity and a closed multiplicative cone containing all squares is order and algebra isomorphic to a subalgebra of $C(K)$. Note that this result is quite different from the one presented here, since H.\ Render is not assuming that the algebra is a lattice, and the conclusion is not of an isometric nature.

Moreover, the condition that squares are positive is closely related to being an \emph{almost \falg} (i.e., $a\wedge b=0$ implies $ab=0$). This connects with the result by E.\ Scheffold \cite[Satz 2.4]{scheffold1981} that an almost \falg\ is semi-simple (i.e., its Jacobson radical is zero) if and only if it is isomorphic to a separating sublattice-algebra of $C_0(X)$, where $X$ is a locally compact Hausdorff space. Note that this result characterizes separating sublattice-algebras of $C_0(X)$, but only in an isomorphic manner. The theorem given here, however, characterizes sublattice-algebras of $C(K)$ isometrically. Both Render and Scheffold's results use adaptations of the Gelfand transform, while the technique presented here relies heavily on Banach lattice results and \cref{prop:martignon}.
\end{rem}

To finish this section, we extract a couple of consequences of the
theorem. The first one is straightforward.

\begin{cor}
    Every AM-algebra with approximate unit is an \falg.
\end{cor}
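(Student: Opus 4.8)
The statement to prove is: \emph{Every AM-algebra with approximate unit is an \falg.}

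\bigskip

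The plan is to deduce this directly from Theorem~\ref{thm:approxAMalg} together with the elementary observation made at the start of Section~\ref{sec:AMalg}. First I would recall that, by Theorem~\ref{thm:approxAMalg}, if $A$ is an AM-algebra with approximate unit then $A$ is lattice and algebra isometric to a closed sublattice-algebra of $C(K)$ for some compact Hausdorff space $K$. So it suffices to check that every closed sublattice-algebra of $C(K)$ is an \falg. The defining property to verify is: whenever $a \wedge b = 0$ in $A$, we have $(ac)\wedge b = 0 = (ca)\wedge b$ for every $c \in A_+$.

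The key step is the pointwise computation, which was already flagged in the text immediately before the definition of AM-algebra with unit. Identifying $A$ with its image in $C(K)$, suppose $f, g \in A$ satisfy $f \wedge g = 0$, i.e.\ $\min\{f(t), g(t)\} = 0$ for every $t \in K$, and let $h \in A_+$, so $h(t) \ge 0$ for all $t$. Fix $t \in K$. If $f(t) = 0$ then $(fh)(t) = f(t)h(t) = 0$, hence $\min\{(fh)(t), g(t)\} = 0$. If instead $g(t) = 0$, then trivially $\min\{(fh)(t), g(t)\} = 0$. In either case the minimum vanishes, so $(fh) \wedge g = 0$ in $C(K)$, and since $A$ is a sublattice the infimum computed in $A$ agrees with the one in $C(K)$; thus $(fh) \wedge g = 0$ in $A$. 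Because $C(K)$ is commutative, $fh = hf$, so $(hf)\wedge g = 0$ as well.

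This is essentially all there is to it; there is no real obstacle, since the hard analytic work has already been done in Theorem~\ref{thm:approxAMalg}. The only minor point worth stating explicitly is that a closed sublattice-algebra of $C(K)$ is in particular a Banach lattice algebra (the product of positives is positive, as this holds pointwise in $C(K)$ and the subalgebra is closed under products), so the notion of \falg\ applies to it; and that lattice operations in a sublattice coincide with those of the ambient space, which justifies transferring the identity $f\wedge g = 0 \Rightarrow (fh)\wedge g = 0$ from $C(K)$ down to $A$. I would therefore phrase the proof in one short paragraph: invoke Theorem~\ref{thm:approxAMalg} to reduce to a closed sublattice-algebra of $C(K)$, then give the two-line pointwise argument above.
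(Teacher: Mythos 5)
Your proof is correct and follows exactly the route the paper intends: the corollary is deduced from Theorem~\ref{thm:approxAMalg} together with the pointwise observation, made at the start of Section~\ref{sec:AMalg}, that closed sublattice-algebras of $C(K)$ satisfy the \falg\ condition. The paper labels this ``straightforward'' and omits the details; your write-up simply makes the same two steps explicit.
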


With this corollary we can come back to the example left hanging in
\cref{ex:approxAMalg}.

\begin{example}\label{ex:nonAMalg}
    Even if a Banach lattice algebra has an approximate order unit and
    an approximate algebraic identity, it need not be an AM-algebra with
    approximate unit. Consider the Banach
    lattice $c_0\oplus \R$ with supremum norm and coordinatewise
    order. For $(x,\lambda ),(y,\mu )\in c_0\oplus \R$, define
    the product:
    \[
        (x,\lambda )(y,\mu )=(xy+\lambda y+\mu x,\lambda \mu
        ).
    \]
    Then $c_0\oplus \R$ is a Banach lattice algebra with identity
    $(0,1)$. It is also an AM-space, so it has an approximate
    order unit. However, it is not an AM-algebra with
    approximate unit, because it is not an \falg\ (if $x\in c_0$ is nonzero, then $(x,0)\wedge (0,1)=(0,0)$, but $(x,0)(0,1)=(x,0)\neq (0,0)$).
\end{example}

A precise formulation of the second consequence requires working over the
complex field, and therefore requires the introduction of complex Banach
lattice algebras. Let $A$ be a Banach lattice algebra. Applying the procedure for the
complexification of Banach lattices (see \cite[Section
3.2]{abramovich_aliprantis2002}) one
obtains the complex Banach lattice $A_\C$ with norm $\|z\|_\C=\| |z|
\|$. Together with the usual product
\[
    (x_1+ix_2)(y_1+iy_2)=(x_1y_1-x_2y_2)+i(x_1y_2+x_2y_1)\quad\text{where
    }x_1,x_2,y_1,y_2 \in A,
\]
the space $A_\C$ becomes a complex algebra, which has an identity precisely
when $A$ has one. This product is compatible
with the complex lattice structure in the sense that $|z_1 z_2|\le
|z_1| |z_2|$, for all $z_1,z_2 \in A_\C$. This fact is not trivial,
and a proof may be found in \cite{huijsmans1985}. The submultiplicativity of the
norm on $A$ immediately implies that $\|z_1 z_2\|_\C\le \|z_1\|_\C
\|z_2\|_\C$ for $z_1,z_2 \in A_\C$. Hence $A_\C$ is also a complex
Banach algebra with norm $\|{\cdot }\|_\C$. The space $A_\C$ with its
structures of complex Banach lattice and complex Banach algebra is
called a \emph{complex Banach lattice algebra}. When $A$ has an
identity, we
emphasize it by saying that $A_\C$ is a \emph{complex Banach
lattice algebra with identity}.

For example, if $K$ is a compact Hausdorff space, the
complexification of $C(K)$ may be identified with $C(K,\C)$, the space
of continuous complex-valued functions on~$K$. Similarly, the
complexification $A_\C$ of any closed sublattice-algebra $A$ of $C(K)$
(i.e., of any AM-algebra with approximate unit) will be a subalgebra
of $C(K,\C)$. It is clear that $A_\C$, being the complexification of
$A$, is closed under involution. It follows that it is a
C$^{*}$-algebra, in which the cone of self-adjoint elements with positive spectrum
(i.e., the positive elements in the sense of C$^{*}$-algebras) is
precisely $A_+$.  Conversely, by a result of S.\ Sherman
\cite{sherman1951}, any C$^{*}$-algebra $A_\C$
in which the cone of positive elements (in the C$^{*}$-algebra sense)
forms a lattice, must be commutative, hence a closed
subalgebra of $C(K,\C)$. Moreover, $A_\C$ will be the complexification
of $A\cap C(K)$, which is a closed sublattice-algebra of
$C(K)$, hence an AM-algebra with approximate unit. This discussion is
summarized in the final corollary.

\begin{cor}
    Let $A$ be a Banach lattice algebra. Its complexification $A_\C$
    can be endowed with a C$^*$-algebra structure in such a way that
    $A_+$ is the cone of self-adjoint elements with positive spectrum
    if and only if $A$ is an AM-algebra with approximate unit.
\end{cor}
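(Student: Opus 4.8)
The statement is essentially a repackaging of the discussion preceding it, so the plan is to turn that discussion into a proof. For the ``only if'' direction, suppose $A$ is an AM-algebra with approximate unit. By \cref{thm:generalAMalg}(ii) (equivalently \cref{thm:approxAMalg}) I may assume that $A$ is a closed sublattice-algebra of $C(K)$ for some compact Hausdorff $K$, say $A=\{\,f\in C(K):f|_F=0\,\}$ for a closed set $F\subseteq K$. Complexifying, $A_\C$ is identified with $\{\,f\in C(K,\C):f|_F=0\,\}$, a norm-closed subalgebra of $C(K,\C)$ closed under complex conjugation; moreover $\|z\|_\C=\|\,|z|\,\|=\sup_{t}\sqrt{x(t)^2+y(t)^2}=\|z\|_\infty$ for $z=x+iy\in A_\C$, because the inclusion $A\hookrightarrow C(K)$ is a lattice isometry. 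Hence $A_\C$, with complex conjugation as involution, is a C$^{*}$-algebra. Its self-adjoint elements are exactly the real-valued functions in $A_\C$, that is, the elements of $A$; among these, the ones with positive spectrum are precisely the pointwise nonnegative ones, i.e.\ the elements of $A_+$. This settles one implication.

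For the converse, suppose $A_\C$ (with its product and the norm $\|\cdot\|_\C$) carries an involution $*$ making it a C$^{*}$-algebra whose cone of positive elements is $A_+$. The first step is to pin down $*$. In any C$^{*}$-algebra the self-adjoint part is the real linear span of the positive cone, so $(A_\C)_{\mathrm{sa}}=A_+-A_+=A$, the last equality because $A$ is a Banach lattice; writing an arbitrary $z\in A_\C$ as $z=x+iy$ with $x,y\in A$ and comparing with the canonical decomposition $z=\tfrac{1}{2}(z+z^{*})+i\tfrac{1}{2i}(z-z^{*})$ into self-adjoint parts forces $z^{*}=x-iy$. Thus $*$ is the canonical involution of the complexification, and in particular the order induced on $(A_\C)_{\mathrm{sa}}=A$ by the C$^{*}$-positive cone $A_+$ is exactly the given Banach-lattice order of $A$, which is a lattice order. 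By Sherman's theorem \cite{sherman1951}, $A_\C$ is commutative, hence $*$-isometrically isomorphic to $C_0(X,\C)$ for some locally compact Hausdorff space $X$.

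It remains to transport the lattice structure. The Gelfand isomorphism $A_\C\to C_0(X,\C)$ is isometric, multiplicative, $*$-preserving, and maps $A_+$ onto the cone of nonnegative functions; restricting it to self-adjoint parts gives a linear bijection $A\to C_0(X,\R)$ that is an isometric algebra homomorphism and an order isomorphism onto $C_0(X,\R)_+$, hence a lattice isomorphism, since a bijective linear order isomorphism between vector lattices automatically preserves $\vee$ and $\wedge$. Therefore $A$ is lattice and algebra isometric to $C_0(X,\R)=\{\,f\in C(X^{+},\R):f(\infty)=0\,\}$, a closed sublattice-algebra of $C(X^{+})$, where $X^{+}$ denotes the one-point compactification (this compactification step is vacuous if $X$ is already compact), and \cref{thm:approxAMalg} then shows that $A$ is an AM-algebra with approximate unit. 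The only genuinely delicate point in the whole argument is the identification of the abstract involution $*$ with the canonical complexification involution, together with the attendant observation that the order the C$^{*}$-positive cone induces on the self-adjoint part is precisely the original lattice order of $A$ — this is exactly what makes Sherman's theorem applicable; the remaining steps (matching $\|\cdot\|_\C$ with the sup norm, and upgrading the Gelfand $*$-isomorphism to a lattice isomorphism) are routine bookkeeping.
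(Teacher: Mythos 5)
Your proof is correct and follows essentially the same route as the paper's own argument: the forward direction via the concrete representation of an AM-algebra with approximate unit inside $C(K)$ (so that $A_\C$ becomes a conjugation-closed subalgebra of $C(K,\C)$), and the converse via Sherman's theorem plus the commutative Gelfand representation, transported back through \cref{thm:approxAMalg}. You merely make explicit some details the paper leaves implicit (identifying the involution with the canonical one and checking that the norms and lattice structure match), which is a faithful elaboration rather than a different approach.
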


\section{Banach lattice algebras with unique multiplication}\label{sec:products}

Recall that a \emph{vector lattice algebra} is a vector lattice
together with a real algebra structure in which the product of positive
elements is positive. A \emph{vector lattice algebra with identity} is
a vector lattice algebra together with a positive algebraic identity.
Recall also from \cref{prop:martignon} in the introduction that, fixed
the pointwise lattice structure and the algebraic identity to be the
constant one function, there is a single product in $C(K)$ that makes
it a vector lattice algebra (namely, the pointwise product). The
goal of this section is to prove the converse of this result.

\products*

\begin{rem}
In contrast, given an Archimedean \falg\ with positive identity, there is a unique \falg\ product having the same identity (see \cite[Theorem 2.58]{aliprantis_burkinshaw2006}). Thus, if we only consider \falg\ products, then every single Archimedean \falg\ with positive identity has uniqueness of the product.
\end{rem}

Before we can proceed to the proof, we need the following properties
of Banach lattice algebras with identity.

\begin{thm}\label{thm:bla_ideal}
    Let $A$ be a Banach lattice algebra with identity $e$, and let
    \[ A_e=\{\, a \in A : |a|\le \lambda e\text{ for some }\lambda >0 \,
    \} \]
    be the (order) ideal generated by the identity element.
    \begin{enumerate}
        \item The space $A_e$ is a Banach lattice
            algebra, lattice and algebra isometric to $C(K)$,
            for a certain compact Hausdorff space $K$.
        \item The ideal $A_e$ is a principal projection band in $A$.
    \end{enumerate}
\end{thm}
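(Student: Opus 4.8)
The plan for (i) is to equip $A_e$ with the order-unit norm $\|a\|_e=\inf\{\lambda>0:|a|\le\lambda e\}$ and to check that $(A_e,\|\cdot\|_e)$ is an AM-algebra with unit $e$, whereupon \cref{thm:AMalg} applies. Three points must be checked. First, $(A_e,\|\cdot\|_e)$ is complete: from $|a|\le\|a\|_e\,e$ one gets $\|a\|_A\le\|a\|_e$, so a $\|\cdot\|_e$-Cauchy sequence converges in $A$ to some $a$; freezing one index, letting the other tend to infinity, and using that the order intervals $[-\varepsilon e,\varepsilon e]$ are norm-closed, one obtains $a\in A_e$ and $\|a_n-a\|_e\to 0$. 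Second, $A_e$ is closed under the product and $\|\cdot\|_e$ is submultiplicative: this rests on the inequality $|ab|\le|a||b|$, valid in any vector lattice algebra — writing $a=a^{+}-a^{-}$, $b=b^{+}-b^{-}$, the four products $a^{\pm}b^{\pm}$ are positive, so $ab$ is a difference of positives and $|ab|\le(a^{+}+a^{-})(b^{+}+b^{-})=|a||b|$ — combined with $e^{2}=e$ and monotonicity of multiplication on the positive cone, which gives $|ab|\le\|a\|_e\|b\|_e\,e$. Third, $\|e\|_e=1$, products of positive elements remain positive, and $(A_e,\|\cdot\|_e)$ is by construction an AM-space with order unit $e$; hence it is an AM-algebra with unit, and \cref{thm:AMalg} yields the lattice and algebra isometry $A_e\cong C(K)$ with $e\leftrightarrow\mathbb{1}_K$.

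For (ii), the goal is to show that $A_e$ coincides with the band $\{e\}^{dd}$ generated by $e$ and that this band is a projection band (\enquote{principal} then referring to the generator $e$). The inclusion $A_e\subseteq\{e\}^{dd}$ is automatic. The plan is to reduce everything to the single assertion that $A_e$ is norm-closed in $A$. Granting that, the identity map $(A_e,\|\cdot\|_e)\to(A_e,\|\cdot\|_A)$ is a continuous bijection of Banach spaces, so by the Open Mapping Theorem the two norms are equivalent on $A_e$. Then for any $0\le x\in A$ the increasing net $(x\wedge ne)_n\subseteq A_e$ is $\|\cdot\|_A$-bounded by $\|x\|$, hence $\|\cdot\|_e$-bounded, hence dominated by a fixed $Me$; so $x\wedge ne=x\wedge Me$ for all $n\ge M$ and $\sup_n(x\wedge ne)$ exists in $A$ and lies in $A_e$. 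By standard vector-lattice theory $x\mapsto\sup_n(x^{+}\wedge ne)-\sup_n(x^{-}\wedge ne)$ is then the band projection onto $\{e\}^{dd}$; since its range is $\{e\}^{dd}$ and is contained in $A_e$, we conclude $\{e\}^{dd}=A_e$, a projection band, as desired.

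The hard part is thus the norm-closedness of $A_e$ in $A$, and this is precisely where the algebraic identity is needed: for a bare order unit the ideal it generates need not be closed (e.g.\ for $e(t)=t$ in $C[0,1]$ it is dense but not closed), and there $e$ is not the algebraic identity. I would attack it by showing that $\|\cdot\|_A$ and $\|\cdot\|_e$ actually agree on $A_e$, so that $A_e$ is the isometric — hence closed — image of the complete space $(A_e,\|\cdot\|_e)\cong C(K)$. For $b\in A_e$ one has $\|b\|_A\le\|b\|_e$, and $\|b\|_e$ equals the spectral radius $r_{A_e}(b)$ of $b$ in the unital algebra $A_e\cong C(K)$; since always $r_A(b)\le\|b\|_A$ and $\sigma_A(b)\subseteq\sigma_{A_e}(b)$, equality of the norms would follow from equality of the two spectra, i.e.\ from $A_e$ being inverse-closed in $A$. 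Establishing that an element of $A_e$ invertible in $A$ has its inverse back in $A_e$ — ruling out, via $|ab|\le|a||b|$ and submultiplicativity of the norm, that an element vanishing in the $C(K)$-picture becomes invertible in the larger algebra — is the genuine obstacle; once it is in place, the rest of (ii) is routine as above.
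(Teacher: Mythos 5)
Much of your plan mirrors the paper: part (i) is obtained exactly as there (equip $A_e$ with $\|\cdot\|_e$, check it is an AM-algebra with unit, apply \cref{thm:AMalg}), and your derivation of (ii) from norm-closedness of $A_e$ (open mapping theorem, $x\wedge ne=x\wedge Me$ for large $n$, the standard criterion for $\{e\}^{dd}$ to be a projection band) is a correct and attractive alternative to the paper's citation of \cite{huijsmans1988}. The problem is that everything you do hangs on the single step you explicitly leave open: that $A_e$ is inverse-closed in $A$ (equivalently, after your reductions, that $\|\cdot\|_e$ and $\|\cdot\|_A$ agree on $A_e$, equivalently that $A_e$ is closed). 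This is not a routine verification, and the attack you sketch is circular. Suppose $x\in A_e$ has $\inf_K|x|=0$ in the $C(K)$-picture but $x$ is invertible in $A$ with inverse $y$. The natural test elements $u_\varepsilon=(e-\varepsilon^{-1}|x|)^{+}\in[0,e]$ satisfy $\|u_\varepsilon\|_e=1$ and $|x|u_\varepsilon\le\varepsilon e$, and from $u_\varepsilon=(yx)u_\varepsilon$ together with $|ab|\le|a||b|$ one gets $u_\varepsilon\le\varepsilon|y|$, hence $\|u_\varepsilon\|_A\le\varepsilon\|y\|_A\to0$. But this contradicts $\|u_\varepsilon\|_e=1$ only if one already knows the two norms are comparable on $A_e$ --- which is precisely what is to be proved; the phenomenon you must exclude (elements of $[0,e]$ with $\|\cdot\|_e$-norm one and tiny $A$-norm) is exactly the failure of closedness of $A_e$. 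Powers $u_\varepsilon^{n}$, submultiplicativity, etc.\ run into the same circularity. So the ``genuine obstacle'' you name is in fact the whole content of the theorem, and your proposal does not close it.

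It is also worth noting that the paper's logical order is the reverse of yours. Its sketch takes part (ii) from \cite{huijsmans1988} (Theorems 1 and 2) and the inverse-closedness is then easy: if $a\in A_e$ and $b\perp e$, then $|ab|\le|a||b|\le\lambda e|b|=\lambda|b|$, so $ab\perp e$; hence, writing $x^{-1}=y_1+y_2$ along the band decomposition $A=A_e\oplus A_e^{d}$ and splitting $e=xy_1+xy_2$, one gets $xy_1=e$ and $y_1x=e$, so $x^{-1}=y_1\in A_e$, and the spectral-radius argument then yields $\|\cdot\|_e=\|\cdot\|_A$ on $A_e$. In short, the provable chain is (ii) $\Rightarrow$ inverse-closedness $\Rightarrow$ isometry, whereas your chain is inverse-closedness $\Rightarrow$ isometry/closedness $\Rightarrow$ (ii). To complete your proof you must either import the projection-band theorem of Huijsmans (as the paper does) or supply an independent proof that $A_e$ is closed; the elementary manipulations with $|ab|\le|a||b|$ and submultiplicativity that you gesture at do not suffice.
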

\begin{proof}[Sketch of proof.]
    It is not difficult to check that $(A_e,\|{\cdot }\|_e)$, where
    $\|x\|_e=\inf \{\, \lambda >0 : |x|\le \lambda e \, \} $ for $x
    \in A_e$, is an AM-algebra with unit $e$. By
    \cref{thm:generalAMalg} there exists a compact Hausdorff space $K$
    such that $(A_e,\|{\cdot }\|_e)$ is lattice and algebra isometric to
    $C(K)$. One can check that $A_e$ is such that if $x \in A_e$ is
    invertible in $A$, then $x ^{-1}\in A_e$; in particular, the spectral
    radius $r(x)$ is the same computed with respect to $A_e$ or the
    whole $A$. From the identification of $A_e$ with $C(K)$ it is then
    clear that $\|x\|_e=r(x)\le \|x\|$. And since
    $\|x\|\le \|x\|_e \|e\|=\|x\|_e$ it follows $\|x\|=\|x\|_e$ for every $x\in A_e$.

    For the second part, see \cite[Theorems 1 and 2]{huijsmans1988}.
\end{proof}

\begin{proof}[Proof of \cref{thm:products}]
    If $A=A_e$,
    then the result follows from \cref{thm:bla_ideal}. Suppose
    that $A\neq A_e$. We are going to exhibit another product that
    makes $A$ a vector lattice algebra with identity $e$.

    Let $P\colon A\to A$ be the band projection onto $A_e$, and let
    $P^{d}$ be its disjoint complement. Let
    $\alpha ,\beta \in A_e^{*}$ be multiplicative functionals with
    $\alpha (e)=\beta (e)=1$ (if we identify $A_e$ with $C(K)$,
    $\alpha$ and $\beta $ are just evaluations at points of $K$). For $x,y
    \in A$, define their product $\star$ as
    \[
    x\star y=PxPy+\alpha (Px)P^{d}y+P^{d}x\beta (Py).
    \]
    First we have to check that $\star$ is actually a product. It is
    clear from the definition that $\star$ is bilinear. For convenice of the reader we include the details for checking associativity: Given $x,y,z \in A$,
    note that $P(x\star y)=Px Py$ whereas $P^{d}(x\star y)=\alpha
    (Px)P^{d}y+P^{d}x \beta (Py)$. Then
    \begin{align*}
        (x\star y)\star z&=P(x\star y)Pz+\alpha (P(x\star
        y))P^{d}z+P^{d}(x\star y)\beta (Pz)\\
                         &=Px Py Pz+\alpha (Px Py)P^{d}z+\alpha
                         (Px)\beta (Pz)P^{d}y+\beta (Py)\beta
                         (Pz)P^{d}x,
    \end{align*}
    and
    \begin{align*}
        x\star (y\star z)&=PxP(y\star z)+\alpha (Px)P^{d}(y\star
        z)+P^{d}x\beta (P(y\star z))\\
                         &=PxPyPz+\alpha (Px)\alpha (Py)P^{d}z+\alpha
                         (Px)\beta (Pz)P^{d}y+\beta (Py Pz)P^{d}x.
    \end{align*}
    Since $\alpha $ and $\beta $ are multiplicative, it follows that
    $(x\star y)\star z=x\star (y\star z)$. 
    
    Now, it is clear that if $x,y
    \in A_+$ then $x\star y \in A_+$. Also,
    \[
    x\star e=PxPe+P^{d}x\beta (Pe)=x \quad\text{and}\quad e\star x=PePx+\alpha
    (Pe)P^{d}x=x,
    \]
    so $e$ is still the identity for $\star$. We have thus shown that $A$,
    equipped with this product, is a vector lattice algebra with
    identity $e$.

    If $A_e\neq
    \R$ (i.e., if the compact $K$ has more than one point), different
    functionals $\alpha $ and $\beta $ will give different products.
    Also, if there exist $x,y \in A_e^{d}$ such that $xy\neq
    0$, then $x\star y=0$, and the product $\star$ is different from
    the one we started with. Thus, in these cases we have
    at least two different products. 
    
    It remains to check what happens
    when $A_e=\R$ and $xy=0$ for all $x,y \in A_e^{d}$. In this case,
    any $x,y \in A$ can be written as $x=\lambda e+x'$ and $y=\mu
    e+y'$ with $\lambda ,\mu \in \R$, $x',y' \in A_e^{d}$, and then
    \[
    xy=\lambda \mu e+\lambda y'+\mu x'.
    \]
    Let $\phi \in (A_e^{d})^{*}_+$ be a non-zero functional, let $x_0
    \in (A_e^{d})_+$ be a non-zero element, and define the product
    \[
    x\ast y= \lambda \mu e+\lambda y'+\mu x'+\phi (x')\phi (y')x_0.
    \]
    This is clearly bilinear. Let $z=\nu e+z'$, with $\nu \in \R$ and
    $z' \in A_e^{d}$, then
    \begin{align*}
        (x\ast y)\ast z&=\lambda \mu \nu e+\lambda \mu z'+\nu
        (\lambda y'+\mu x'+\phi (x')\phi (y')x_0)\\&+\phi (\lambda y'+\mu
        x'+\phi (x')\phi (y')x_0)\phi (z')x_0\\
                       &=\lambda \mu \nu e+\lambda \nu z'+\nu \lambda
        y'+\nu \mu x'\\&+\big[ \nu \phi (x')\phi
                           (y')+\lambda \phi (y')\phi (z')+\mu \phi
                           (x')\phi (z')+\phi (x')\phi (y')\phi
                       (z')\phi (x_0)\big]x_0\\
                       &=x\ast (y\ast z),
    \end{align*}
    and this shows that it is associative. If $x \in A_+$, then
    $\lambda \ge 0$ and $x' \in A_+$, so the product
    of positive elements is
    positive. It is straightforward that $e$ is the identity for this
    product; hence $A$ with $\ast$ is a vector lattice algebra with
    identity $e$. We have chosen $\phi $ to be a non-zero functional; pick
    $x \in A_e^{d}$ such that $\phi (x)\neq 0$. Then $x\ast x=\phi
    (x)^2x_0\neq 0$, whereas $x^2=0$ in the original product of $A$.
    We have again exhibited two different products with the desired
    properties, and this finishes the proof.
\end{proof}

\begin{rem}
    In our definition of Banach lattice algebras with identity we assumed
    that the identity had norm 1, following
    \cite{wickstead2017_questions}. However, it is also common to only
    ask for the
    identity to be positive. This condition is weaker (see
    \cite{bernau_huijsmans1990_unit}). If we only assume positivity of the
    identity element in \cref{thm:bla_ideal}, then $A_e$ is lattice and
    algebra isomorphic to $C(K)$ (but not isometric in general), and
    similarly in \cref{thm:products} we can only conclude that the
    Banach lattice algebra is isomorphic to $C(K)$. The results of
    \cref{sec:AMalg} are not affected by this change.
\end{rem}

\section*{Acknowledgements}

Research supported by grants PID2020-116398GB-I00 and CEX2019-000904-S funded by  MICIU/AEI/10.13039/501100011033. Research of D.~Muñoz-Lahoz supported by an FPI--UAM 2023 contract funded by Universidad Autónoma de Madrid. Research of P.~Tradacete also supported by a 2022 Leonardo Grant for Researchers and Cultural Creators, BBVA Foundation.

\bibliographystyle{amsplain}
\bibliography{library}

\end{document}